\DeclareFontFamily{U}{wncy}{}
\DeclareSymbolFont{mcy}{U}{wncy}{m}{n}
\DeclareMathSymbol{\Sh}{\mathord}{mcy}{"58} 
\newcommand{\defi}[1]{\textsf{#1}} 
\newcommand{\textcyr}[1]{%
 {\fontencoding{OT2}\fontfamily{wncyr}\fontseries{m}\fontshape{n}\selectfont #1}}
\newcommand{\Sha}{{\mbox{\textcyr{Sh}}}}
\def\act#1#2%
\newcommand{\Z}{{\mathbb Z}}
\newcommand{\Q}{{\mathbb Q}}
\newcommand{\R}{{\mathbb R}}
\newcommand{\C}{{\mathbb C}}
\newcommand{\F}{{\mathbb F}}
\newcommand{\A}{{\mathbb A}}
\newcommand{\PP}{{\mathbb P}}
\newcommand{\G}{{\mathbb G}}
\newcommand{\Kbar}{{\overline{K}}}
\newcommand{\kbar}{{\overline{k}}}
\newcommand{\Lbar}{{\overline{L}}}
\newcommand{\Xbar}{{\overline{X}}}
\newcommand{\Ybar}{{\overline{Y}}}
\newcommand{\calA}{{\mathcal A}}
\newcommand{\calO}{{\mathcal O}}
\DeclareMathOperator{\Hom}{Hom}
\DeclareMathOperator{\divv}{div}
\DeclareMathOperator{\ord}{ord}
\DeclareMathOperator{\Gal}{Gal}
\DeclareMathOperator{\Cor}{Cor}
\DeclareMathOperator{\Br}{Br}
\DeclareMathOperator{\Sym}{Sym}
\DeclareMathOperator{\Div}{Div}
\DeclareMathOperator{\Pic}{Pic}
\DeclareMathOperator{\Alb}{Alb}
\DeclareMathOperator{\Sel}{Sel}
\DeclareMathOperator{\HH}{H}
\DeclareMathOperator{\Spec}{Spec}
\DeclareMathOperator{\inv}{inv}
\newcommand{\res}{\operatorname{res}}
\newtheorem{Theorem}{Theorem}[section]
\newtheorem{Lemma}[Theorem]{Lemma}
\newtheorem{Proposition}[Theorem]{Proposition}
\newtheorem{Corollary}[Theorem]{Corollary}
\theoremstyle{definition}
\newtheorem{Definition}[Theorem]{Definition}
\newtheorem{Example}[Theorem]{Example}
\newtheorem{Remark}[Theorem]{Remark}
\newtheorem{Algorithm}[Theorem]{Algorithm}
\numberwithin{equation}{section}
\begin{document}

\title{Explicit Brauer-Manin obstructions on plane quartics}

\author{Nils Bruin}
\address{Department of Mathematics, Simon Fraser University, Burnaby BC V5A 1S6, Canada}
\email{nbruin@sfu.ca}
\urladdr{https://www.cecm.sfu.ca/~nbruin}
\author{Brendan Creutz}
\address{School of Mathematics and Statistics, University of Canterbury, Private Bag 4800, Christchurch 8140, New Zealand}
\email{brendan.creutz@canterbury.ac.nz}
\urladdr{http://www.math.canterbury.ac.nz/\~{}bcreutz}

\maketitle

\begin{abstract}
We describe a method to show a plane quartic over a number field has no rational points. The method can be adapted to show that a curve does not have divisors of degree 1 or 2 and can be generalized to arbitrary smooth projective curves. Our approach significantly improves on the applicability over previous 2-cover descent methods by not requiring the computation of the full $S$-unit group of the \'etale algebras involved. We illustrate the practicality with several examples, including examples where we determine plane quartics to be of index 2 or 4 when the maximum local index is strictly smaller.
\end{abstract}

\section{Introduction}

We describe a practical method to show that a plane quartic curve $X$ over a number field $k$ has no rational points, even if it has points everywhere locally. The strategy can also be used to prove that $X$ has no $k$-rational divisors of degree $1$ or $2$. 

More specifically, we compute obstructions that arise from $2$-cover descent. The main innovation over previous work \cite{BruinStoll,BPS, CreutzANTS,CreutzGenJacs} is that we do not need to fully determine $S$-unit groups of number fields to obtain unconditional results: we are already able to obtain nontrivial results by knowing some explicit $S$-units. Our algorithm generalizes the one developed in \cite{CreutzSrivastava} for hyperelliptic curves, and we present a much simplified proof of correctness. Like \cite{BPS}, our approach can be readily generalized to apply to arbitrary smooth projective curves, although computational costs rise exponentially with the genus for non-hyperelliptic curves.

In Section~\ref{sec:etale} we consider an \'etale algebra $L$ over a number field $k$ and a finite set of primes $S$. In Theorem~\ref{thm:pairing} we use Hilbert symbols to establish a pairing which imposes constraints on the image of the localization map
\[
\left(\frac{L^\times}{k^\times L^{\times 2}}\right)_S \to \prod_{v \in S} \frac{L_v^\times}{k_v^\times L_v^{\times 2}}\,.
\]
coming from $S$-units of square norm.

In Section~\ref{sec:obstruction_quartics} we explain how to use this to obtain an obstruction to rational points and/or divisors. We illustrate the argument here for rational points on a plane quartic $X$ over a number field $k$. The $28$ bitangents give rise to an $L$ and a map $C_f\colon X(k)\to L^\times / k^\times L^{\times 2}$ as well as local versions. For a suitable finite set of primes $S$, the map factors through the part unramified outside $S$,
\[C_f(X(k)) \to (L^\times/k^\times L^{\times 2})_S \to \prod_{v\in S} C_f(X(k_v)).\]
Given suitable $S$-units of square norm we may be able to show that the image of the localization map does not intersect $\prod_{v\in S} C_f(X(k_v))$, in which case we can conclude that $C_f(X(k))$ is empty and, hence, that $X(k)$ is empty. This approach contrasts that of \cite{BPS} where one must compute generators for all of $(L^\times/k^\times L^{\times 2})_S$ to obtain the image in the localization. Both approaches require computing the local images $C_f(X(k_v))$. In Remark~\ref{rmk:unram} we describe several ways to reduce the number of primes where this is required.

In Section~\ref{sec:examples} we use our Magma~\cite{magma} implementation \cite{Github} 
on several plane quartic curves over $\Q$ occurring in the literature to prove they have no rational points. In Section~\ref{sec:Index} we describe how the methods can be used to determine the index of a plane quartic and give examples of curves of index $2$ or $4$ where the maximum local index is strictly smaller. Finally, in Section~\ref{sec:Descent} we describe how our obstruction can be interpreted in terms of finite abelian descent and Brauer-Manin obstructions. In particular, Corollary~\ref{Cor:Br2} shows that our obstruction is equivalent to these obstructions, for suitably large $S$.\\

\noindent{\bf Acknowledgements:}
The first author acknowledges support from Canada's NSERC, RGPIN-2024-04233.
The second author was supported in part by the Marsden Fund administered by Royal Society Te Ap\=arangi.

\section{\'Etale algebras}\label{sec:etale}

Let $L/k$ be an \'etale algebra over a number field $k$. Equivalently, $L \simeq \prod_{i =1}^m K_i$ is a product of finite field extensions $K_i/k$. Let $v$ be a prime of $k$ with completion $k_v$. Then $L_v := L \otimes_k k_v$ decomposes as the product $L_v\simeq \prod_{i = 1}^m \prod_{w \mid v} K_{i,w}$ of the completions of the $K_i$ at the primes of $K_i$ above $v$. We write $\kbar$ for an algebraic closure of $k$ and $\Lbar = L \otimes \kbar$.

\begin{Definition}
A class in $L_v^\times/L_v^{\times 2}$ is \defi{unramified} if it is represented by some $\ell_v = (\ell_{i,w}) \in L_v \simeq \prod K_{i,w}$ such that all of the extensions $K_{i,w}(\sqrt{\ell_{i,w}})/K_{i,w}$ are unramified. A class in $L_v^\times/k_v^\times L_v^{\times 2}$ is \defi{unramified} if it can be represented by a class in $L_v^\times/L_v^{\times 2}$ that is unramified. For a finite set of primes $S$ of $k$, we write $(L^\times/k^\times L^{\times 2})_S$ and $(L^\times/L^{\times 2})_S$ for the subgroups of classes whose images in $L_v/k_v^\times L_v^{\times 2}$ (resp. in $L_v^\times/L_v^{\times 2}$) are unramified at all $v \not\in S$.
\end{Definition}
%

The norm map $N_{L/k}\colon L \to k$ induces a homomorphism $L^\times/L^{\times 2} \to k^\times/k^{\times 2}$. We write $(L^\times/L^{\times 2})_{N = 1}$ to denote the kernel of this homomorphism and $(L^\times/L^{\times 2})_{S,N = 1}$ for its intersection with $(L^\times/L^{\times2})_S$.

On each factor of $L_v\simeq \prod_{i = 1}^m \prod_{w \mid v} K_{i,w}$, the Hilbert symbol defines a nondegenerate bilinear pairing on the finite dimensional $\F_2$-vector space $K_{i,w}^\times/K_{i,w}^{\times 2}$ taking values in $\{0,\sfrac{1\!}{2}\} \subset \Q/\Z$. Summing these over the factors yields a bilinear pairing of finite dimensional $\F_2$-vector spaces:
\begin{equation}\label{eq:Hilb}
	(\,,\,)_v\colon \frac{L_v^\times}{L_v^{\times 2}} \times \frac{L_v^\times}{L_v^{\times 2}} \to \Q/\Z\,.
\end{equation}

\begin{Theorem}\label{thm:pairing}
	Let $L/k$ be an \'etale algebra over a number field $k$ and let $S$ be a finite set of primes of $k$. 
	\begin{enumerate}
		\item\label{pairing1} The pairings~\eqref{eq:Hilb} induce a bilinear pairing of finite dimensional $\F_2$-vector spaces
		\[
			\langle\,,\rangle_S \quad \colon \quad \prod_{v \in S} \frac{L_v^\times}{k_v^\times L_v^{\times 2}} \times \left(\frac{L^{\times}}{L^{\times 2}}\right)_{S,N = 1} \to\; \Q/\Z
		\]
		by
		\[
			\langle (\ell_v)_{v\in S} , m \rangle_S= \sum_{v\in S} (\ell_v,\res_v(m))_v \in \Q/\Z\,.
		\]
		\item\label{pairing2} The left kernel of the pairing $\langle\,,\rangle_{S}$ is equal to the image of the restriction map 
		\[
			\left(\frac{L^\times}{k^\times L^{\times 2}}\right)_S \to \prod_{v \in S} \frac{L_v^\times}{k_v^\times L_v^{\times 2}}\,.
		\]
	\end{enumerate}
\end{Theorem}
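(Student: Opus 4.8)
The plan is to translate the whole statement into Galois cohomology of the finite $G_k$-module $T:=\Res_{L/k}\mu_2$, which is self-Cartier-dual. Via Shapiro's lemma $\HH^1(k,T)=L^\times/L^{\times2}$ and $\HH^1(k_v,T)=L_v^\times/L_v^{\times2}$, the local pairings $(\,,\,)_v$ of \eqref{eq:Hilb} become the local Tate duality pairings, and the diagonal $\mu_2\hookrightarrow T$ and the norm $T\twoheadrightarrow\mu_2$ are dual to each other. Setting $N:=T/\mu_2$ and $N':=\ker(T\to\mu_2)$, so that $N'$ is the Cartier dual of $N$, the long exact sequences of $0\to\mu_2\to T\to N\to 0$ and $0\to N'\to T\to\mu_2\to 0$ identify $L^\times/k^\times L^{\times2}$ with the image of $\HH^1(k,T)\to\HH^1(k,N)$ and $(L^\times/L^{\times2})_{N=1}$ with the image of $\HH^1(k,N')\to\HH^1(k,T)$ (similarly over each $k_v$); in particular $L_v^\times/k_v^\times L_v^{\times2}$ and $(L_v^\times/L_v^{\times2})_{N=1}$ are perfectly paired by $(\,,\,)_v$. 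In this language \eqref{pairing2} asserts that the images of the two global groups in the product of these mutually dual local groups are exact orthogonal complements.

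For \eqref{pairing1}, bilinearity and $\F_2$-linearity are formal and the sum over $S$ is finite; the only substantive point is that the value is unchanged when $\ell_v$ is scaled by $k_v^\times$, which by bilinearity reduces to $(c_v,\res_v m)_v=0$ for $c_v\in k_v^\times$. Applying the projection formula for the Hilbert symbol factor by factor gives $(c_v,m_{i,w})_{K_{i,w}}=(c_v,N_{K_{i,w}/k_v}(m_{i,w}))_{k_v}$; summing over the factors yields $(c_v,\res_v m)_v=(c_v,N_{L_v/k_v}(\res_v m))_{k_v}$, which vanishes because $m$ has square norm.

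The inclusion ``image $\subseteq$ left kernel'' of \eqref{pairing2} is elementary. Given $\lambda\in L^\times$ whose class lies in $(L^\times/k^\times L^{\times2})_S$ and $m\in(L^\times/L^{\times2})_{S,N=1}$, global reciprocity on each field factor of $L$ gives $\sum_{\text{all }v}(\lambda_v,m_v)_v=0$, so it suffices to see $(\lambda_v,m_v)_v=0$ for $v\notin S$. There $\lambda\equiv c_v u_v$ in $L_v^\times/L_v^{\times2}$ with $c_v\in k_v^\times$ and $u_v$ unramified, and $\res_v m$ is unramified; then $(c_v,\res_v m)_v=0$ as above (square norm), while $(u_v,\res_v m)_v=0$ on each factor because the Hilbert symbol of two classes defining unramified quadratic extensions vanishes -- at a finite place the norm group of an unramified extension contains the units and the even powers of a uniformizer, and at an archimedean place an unramified class is trivial.

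The reverse inclusion is the heart of the matter. Under the identifications above, the subgroups cut out by ``unramified outside $S$'' correspond to Selmer-type subgroups of $\HH^1(k,N)$ and $\HH^1(k,N')$ defined by explicit local conditions (using the Hasse principle for the Brauer group to recognize ``lies in the image of $\HH^1(k,T)$'' as itself a family of local conditions), and one checks that these two families are orthogonal complements of one another at every place. Granting that, the Poitou--Tate nine-term exact sequence for $N$ and $N'=N^D$ -- equivalently the Greenberg--Wiles formula, which reduces the claim to a dimension count whose local terms are computed place by place -- shows that the images of the two global groups are exact orthogonal complements, which is \eqref{pairing2}. The main obstacle is precisely this verification that the local conditions are complementary at the bad places, namely the archimedean places, the places above $2$, and the places where $L/k$ ramifies, where ``unramified'' behaves irregularly: there one must see that the $k_v^\times$-quotient appearing on the $(L^\times/k^\times L^{\times2})_S$ side is dual, under $(\,,\,)_v$, to the square-norm restriction appearing on the $(L^\times/L^{\times2})_{S,N=1}$ side -- this is where the condition $N=1$ is essential, and it is the one genuinely delicate local computation in the proof.
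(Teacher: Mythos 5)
Your proof of part~\eqref{pairing1} and of the inclusion ``image $\subseteq$ left kernel'' in part~\eqref{pairing2} is sound and essentially matches the paper: the projection formula $(c_v,\res_v m)_v=(c_v,N_{L_v/k_v}(m))_{k_v}$ is exactly the corestriction identity $\Cor_{L_v/k_v}\calA(a,m)=\calA(a,N_{L_v/k_v}(m))$ used in the paper, and your reciprocity-plus-local-vanishing argument for the forward inclusion is the explicit version of what the Poitou--Tate sequence encodes. The local perfectness of the pairing $L_v^\times/k_v^\times L_v^{\times 2}\times(L_v^\times/L_v^{\times 2})_{N=1}\to\Q/\Z$ also follows cleanly from nondegeneracy of the Hilbert symbol together with the projection formula, as you say.

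The genuine gap is in the reverse inclusion. You set up the statement as a Poitou--Tate (or Greenberg--Wiles) duality for $N=T/\mu_2$ and its Cartier dual $N'=\ker(T\to\mu_2)$, which requires verifying that the two families of local Selmer conditions are exact orthogonal complements at \emph{every} place. You correctly identify that this is automatic at good odd places unramified in $L$, but that at archimedean places, places above~$2$, and places ramified in $L/k$ --- and more to the point, at any such place \emph{not} in $S$ --- one must actually check that the subgroup ``unramified mod $k_v^\times$'' of $L_v^\times/k_v^\times L_v^{\times 2}$ is the exact annihilator of the subgroup ``unramified with square norm'' of $(L_v^\times/L_v^{\times 2})_{N=1}$. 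You then write ``Granting that\ldots'' and never supply the verification; you even flag it yourself as ``the one genuinely delicate local computation in the proof.'' As written, the proof is therefore incomplete.

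The paper takes a different and shorter route precisely to avoid this. Instead of dualizing $N$ against $N'$, it applies Poitou--Tate to the two self-dual modules $\mu_2$ and $T=\mu_2(\Lbar)$ over $\Gal(k_S/k)$, for which the unramified-outside-$S$ local conditions are automatically self-dual and the resulting exact sequences are classical (Hilbert reciprocity). Arranging these two sequences in the commutative diagram~\eqref{eq:Hilb1} with vertical maps $i_*$ and $N_{L/k}^*$, the paper then observes that the bottom row surjects termwise onto the sequence~\eqref{eq:Hilb2}; a short diagram chase (using surjectivity onto $(k^\times/k^{\times 2})_S^*$ in the top row to handle the kernels) transfers exactness. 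This sidesteps entirely the bad-place duality check that your approach still owes the reader. If you want to keep your route, you need to either (a)~carry out the local annihilator computation at archimedean, dyadic, and ramified places not in $S$, or (b)~reduce to the case where $S$ contains all such places and then descend --- but the latter is essentially what the paper's quotient argument accomplishes.
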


\begin{proof}
	We first prove~\eqref{pairing1}. Let $v$ be a prime of $k$. It suffices to show that the pairing~\eqref{eq:Hilb} induces a bilinear pairing 
	\begin{equation}\label{Hilb2}
		\langle\,,\,\rangle_v \colon L_v^\times/k_v^\times L_v^{\times 2} \times (L_v^\times/L_v^{\times 2})_{N = 1} \to \Q/\Z\,.
	\end{equation}
	Summing these over the primes in $S$ gives the pairing in the statement.
	
	Let $a \in k_v^\times \subset L_v^\times$ and let $\ell = (\ell_{i,w}) \in L_v^\times \simeq \prod_i\prod_{w\mid v}K_{i,w}^\times$ be an element with $N_{L_v/k_v}(\ell) \in k_v^{\times 2}$. We must show $(a,\ell)_v = 0$. 
	
	Since $a \in k_v^\times$, the Hilbert symbols on $K_{i,w}$ and $k_v$ are related by
	\begin{equation}\label{eq:Cor}
		(a,\ell_{i,w})_{K_{i,w}} = (a,N_{K_{i,w}/k_v}(\ell_{i,w}))_{k_v}\,.
	\end{equation}
	This follows from \cite[XI \S2 Prop.1(ii)]{Serre} and \cite[p. 205, Exercise]{Serre}.
	
	The pairing $(a,\ell)_v$ is the sum of the pairings~\eqref{eq:Cor}. Since the Hilbert symbol on $k_v$ is bilinear and $N_{L_v/k_v}(\ell) \in k_v^{\times 2}$, we have
	\[
		(a,\ell)_v = \sum_{i}\sum_{w\mid v} (a,N_{K_{i,w}/k_v}(\ell_{i,w}))_{k_v} = (a,N_{L_v/k_v}(\ell))_{k_v} = 0\,.
	\]
		Now we prove~\eqref{pairing2}. Let $k_S/k$ be the maximal extension unramified outside $S$. We consider Galois cohomology of the self-dual $\Gal(k_S/k)$-modules $\mu_2(\kbar)$ and $\mu_2(\Lbar)$. Their respective Kummer sequences together with Hilbert's Theorem 90 give isomorphisms $\HH^1(k_S/k,\mu_2) \simeq (k^\times/k^{\times 2})_S$, and $\HH^1(k_S/k,\mu_2(\Lbar)) \simeq (L^\times/L^{\times 2})_S$. Under this identification the Poitou-Tate exact sequences for these $\Gal(k_S/k)$-modules \cite[Theorem 4.10]{Milne} give rise to a commutative diagram of linear maps of $\F_2$-vector spaces with exact rows:
		\begin{equation}\label{eq:Hilb1}
		\begin{tikzcd}
			(k^\times/k^{\times 2})_S \arrow[r] \arrow[d,"i_*"]& \prod_{v\in S} k_v^\times/k_v^{\times 2} \arrow[r] \arrow[d,"i_*"]& \left(k^\times/k^{\times 2}\right)_S^* \arrow[r]\arrow[d,"N_{L/k}^*"]& 1\\	
			\left(L^\times/L^{\times 2}\right)_S \arrow[r]& \prod_{v\in S} L_v^\times/L_v^{\times 2} \arrow[r]& \left(L^\times/L^{\times 2}\right)_S^*\,,
		\end{tikzcd}
		\end{equation}
		where the ${}^*$ denotes the dual vector space, the vertical maps are induced by either the inclusion $i \colon \mu_2(\kbar) \to \mu_2(\Lbar)$ or the norm $N_{L/k}\colon\mu_2(\Lbar) \to \mu_2$, and the second map in the bottom row is given by
		\[
			(\ell_v)_{v \in S} \mapsto \left[ m \mapsto \sum_{v\in S} (\ell_v,m)_v \right]\,.
		\]
		To see that the map to $\left(k^\times/k^{\times 2}\right)_S^*$ is surjective note that the Poitou-Tate sequence in the first row continues
		\[
			\left(k^\times/k^{\times 2}\right)_S^* \to \HH^2(k_S,\mu_2) \to \prod_{v \in S}\HH^1(k_v,\mu_2)\,,
		\]
		and that the second map here is injective by global class field theory. For the reader less familiar with duality theorems in Galois cohomology we note that exactness of~\eqref{eq:Hilb1} can be deduced directly from the well-known Hilbert reciprocity law.
		
		The restriction map and the pairing $\langle\,,\,\rangle_S$ give maps 
		\begin{equation}\label{eq:Hilb2}
			\left(\frac{L^\times}{k^\times L^{\times 2}}\right)_S \to\prod_{v \in S} \frac{L_v^\times}{k_v^\times L_v^{\times 2}} \to \left(\frac{L^\times}{L^{\times 2}}\right)^*_{S,N=1}\,.
		\end{equation}
		The bottom row of~\eqref{eq:Hilb1} maps surjectively onto~\eqref{eq:Hilb2}, from which it follows that~\eqref{eq:Hilb2} is an exact sequence. This exactness is equivalent to the claim that the left kernel of $\langle \,,\, \rangle_{S}$ is the image of the restriction map.
\end{proof}

\subsection{Finding elements in $\left(L^\times/L^{\times 2}\right)_{S}$}\label{sec:findingelements}

The applications we have in mind concern the image of the first map in~\eqref{eq:Hilb2}. Because of the exactness we can compute it either as an image or as a kernel. As we explain below, in either case we end up considering $S$-units in $L^\times$, either in order to represent preimages or to represent classes in $\left(L^\times/L^{\times 2}\right)_{S,N = 1}$ that constrain the image through the pairing $\langle\,,\,\rangle_S$.

We assume that $S$ is a finite set of primes of $k$ including all archimedean primes and all primes above $2$. Then there is an exact sequence
\begin{equation}\label{eq:ClassGroup}
	0 \to \left(\calO_{L,S}^\times/\calO_{L,S}^{\times 2}\right) \to \left(L^\times/L^{\times 2}\right)_{S} \to \operatorname{Cl}(\calO_{L,S})[2] \to 0\,,
\end{equation}
where $\calO_{L,S} = \prod_{i=1}^m \calO_{K_i,S}$ is the product of the rings of $S$-integers of the direct factors $K_i$ of $L$ and $\operatorname{Cl}(\calO_{L,S}) = \prod_{i = 1}^m \operatorname{Cl}(\calO_{K_i,S})$ is the product of their ideal class groups \cite[Proposition 12.6]{PoonenSchaefer}.

Let $K$ be one of the number field factors of $L$. The index calculus techniques of Buchmann's subexponential class- and unit-group algorithm \cite{Buchmann1990} apply to determining $\calO_{K,S}^\times$. We review them here to explain how partial information can be extracted from the procedure. Let $T$ be a set of primes (a `factor basis'). Assuming $S \subset T$ and setting $U = T \setminus S$, there is an exact sequence
\[
	1 \longrightarrow \calO_{K,S}^\times \longrightarrow \calO^\times_{K,T} \stackrel{\phi}\longrightarrow \bigoplus_{\frak{p}\in U}\Z\frak{p} \stackrel{\pi}\longrightarrow \operatorname{Cl}(\calO_{K,S})\,,
\]
where the exponent vector $\phi(\alpha) = (\ord_\frak{p}(\alpha))_{\frak{p}\in U}$ is called a \emph{relation}. A set of elements $\alpha_i \in \calO_{K,T}^\times$ determines a \emph{relation matrix} $M$ with the $\phi(\alpha_i)$ as rows. The left kernel of $M$ determines a subgroup of  $\calO_{K,S}^\times$, generated by the appropriate power products of the $\alpha_i$. A relation matrix $M$ also determines a subgroup of $\calO_{K,S}^{\times}/\calO_{K,S}^{\times 2}$, given by the left kernel of $M \otimes \F_2$, which can be computed using linear algebra over $\F_2$. Doing this for each direct factor $K_i$ of $L$, we obtain a basis for a subgroup of $\calO_{L,S}^\times/\calO_{L,S}^{\times 2}$. By computing the norms of the $\alpha_i$, we can find those linear combinations which lie in the subgroup $(\calO_{L,S}^\times/\calO_{L,S}^{\times 2})_{N=1} \subset (L^\times/L^{\times 2})_{S,N=1}$.

For determining the $S$-unit group and the class group of $\calO_{K,S}$ one takes $T$ large enough to ensure that $\pi$ is surjective (for example if $T$ contains all primes up to the Minkowski bound, or one of the better bounds known to hold conditionally on GRH), and one generates enough $\alpha_i$ to ensure the relations generate the full unit group. For our application we only need to sufficiently constrain the image of the first map in~\eqref{eq:Hilb2}. We can do so with a relation matrix for which the left kernel of $M\otimes \F_2$ is sufficiently large to yield a subgroup of $(L^\times/L^{\times 2})_{S,N=1}$ that under the pairing $\langle\,,\,\rangle_S$ provides sufficient constraints. We don't need surjectivity under $\pi$ or a proof that our subgroup is of index $1$. See Remark~\ref{rem:notall} for an explicit example where we carry out the method using a subgroup of large index.


\section{An obstruction to rational points on plane quartics}
\label{sec:obstruction_quartics}

Let $k$ be a number field with ring of integers $\calO_k$ and let $X/k$ be a smooth quartic curve in $\PP^2$ defined by
\[X\colon g(x,y,z)=0, \text{ with } g(x,y,z) \in \calO_k[x,y,z]\]
 homogeneous of degree $4$. For an extension $K/k$ we denote the base change of $X$ to $K$ by $X_K$ and we write $\Xbar$ for the base change of $X$ to $\kbar$.

We use $\Pic(X)$ to denote the group of $k$-rational divisors on $X$ modulo principal divisors, and use $\Pic_X$ to denote the $k$-scheme representing the Picard functor. We use $\Pic^i(X)$ for the subset of divisor classes of degree $i$ and $\Pic^i_X$ for the degree $i$ component of the Picard scheme. For an extension $K$ of $k$, the rational points $\Pic^i_X(K)$ are \emph{divisor classes} of degree $i$ that are defined over $K$, while $\Pic^i(X_K)$ consists of classes of \emph{divisors} of degree $i$ that are defined over $K$. We note that the natural injective group homomorphism $\Pic(X_K) \to \Pic_X(K)$ need not be surjective. 

We identify $X$ with its image in $\Pic^1_X$. For a subscheme $Y\subset \Pic^i_X$ we introduce a shorthand for the intersection:  \[Y(K)':=Y(K)\cap \Pic^i(X_K).\] In what follows we mainly consider $Y=X$ and $Y=\Pic^i_X$.
%

The $28$ bitangents to $X$ form a $k$-scheme $\Delta = \Spec(L)$, where $L$ is an \'etale $k$-algebra of degree $28$. We write $k(X\times \Delta)$ for the $k$-algebra of rational functions on $X\times \Delta$. Given a bitangent $\delta \in \Delta(\kbar)$, let $\beta_\delta \in \Div(\Xbar)$ denote its contact points, viewed as a divisor of degree $2$ on $\Xbar$. These piece together to give a divisor $\beta \in \Div(X \times \Delta)$. Fix a linear form $ux + vy + wz$ with $u,v,w \in \calO_L$ defining the generic bitangent and a nonzero linear form $h \in k[x,y,z]$. Then $f := (ux+vy+wz)/h \in k(X \times \Delta)^\times$ is a function whose divisor is $\divv(f) = 2\beta - \kappa_X\times \Delta$, where $\kappa_X$ is a canonical divisor on $X$. Algorithms for computing these data from the quartic form $g(x,y,z)$ are described in~\cite[Section 12]{BPS}, as is a formula for computing the discriminant of a plane quartic.

\begin{Lemma}\label{lem:descentmap}
	The function $f \in k(X \times \Delta)^\times$ defines, functorially in the extension $K/k$, homomorphisms 
	\[ C_f \colon \Pic(X_K) \to \frac{(L\otimes K)^\times}{K^\times  (L\otimes K)^{\times 2}}\,,\]
	sending the class of a divisor $D = \sum_{P \in X(\Kbar)} n_PP \ \in \Div(X_K)$ with support disjoint from the zeros and poles of $f$ to the class of $f(D) = \prod_{P}f(P)^{n_P}$ in $(L\otimes K)^\times/K^\times  (L\otimes K)^{\times 2}$.
\end{Lemma}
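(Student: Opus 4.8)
The plan is to construct $C_f$ first on divisors that avoid the zeros and poles of $f$, check that it turns sums into products, and then show it descends to $\Pic(X_K)$ and is functorial. For $\delta\in\Delta(\kbar)$ let $f_\delta\in\kbar(\Xbar)^\times$ be the restriction of $f$ to the fibre $\Xbar\times\{\delta\}$, so that $\divv(f_\delta)=2\beta_\delta-\kappa_X$, where $\kappa_X$ is the canonical divisor appearing in $\divv(f)=2\beta-\kappa_X\times\Delta$; note $\kappa_X$ is $k$-rational, being the divisor $X\cap\{h=0\}$ cut out by the $k$-rational linear form $h$ used to build $f$. Put $\Sigma:=\operatorname{supp}(\kappa_X)\cup\bigcup_{\delta\in\Delta(\kbar)}\operatorname{supp}(\beta_\delta)$, a finite set of closed points of $X$ containing the image in $X$ of the zeros and poles of $f$. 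For $D=\sum_P n_P P\in\Div(X_K)$ with $\operatorname{supp}(D)\cap\Sigma=\emptyset$, each value $f(P)$ — the restriction of $f$ to $\{P\}\times\Delta$ — lies in $(L\otimes\Kbar)^\times$, so $f(D)=\prod_P f(P)^{n_P}\in(L\otimes\Kbar)^\times$; since $f$ is defined over $k$ and $D$ is $\Gal(\Kbar/K)$-stable this product is Galois-invariant, hence lies in $(L\otimes K)^\times$, and componentwise $f(D)=(f_\delta(D))_\delta$. It is immediate from the defining formula that $D\mapsto f(D)$ sends $D_1+D_2$ to $f(D_1)f(D_2)$ whenever all three divisors avoid $\Sigma$.

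The crux is showing that a principal divisor lands in the trivial class. Let $\psi\in K(X)^\times$ with $\operatorname{supp}(\divv\psi)\cap\Sigma=\emptyset$. For each $\delta$ the divisors $\divv(\psi)$ and $\divv(f_\delta)=2\beta_\delta-\kappa_X$ on $\Xbar$ are disjoint, so Weil reciprocity, whose sign is trivial for disjoint divisors, gives
\[
f_\delta(\divv\psi)=\psi(\divv f_\delta)=\psi(\beta_\delta)^2\,\psi(\kappa_X)^{-1}.
\]
Reassembling over the $28$ bitangents: the tuple $(\psi(\beta_\delta))_\delta$ is $\Gal(\Kbar/K)$-stable because $\beta$ is defined over $k$, so it is an element of $(L\otimes K)^\times$; while $\psi(\kappa_X)\in K^\times$ is a single scalar, independent of $\delta$, because $\kappa_X$ is a $k$-rational divisor of $X$. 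Hence $f(\divv\psi)=(\psi(\beta_\delta))_\delta^{2}\cdot\psi(\kappa_X)^{-1}$ lies in $(L\otimes K)^{\times 2}\cdot K^\times$, so its class in $(L\otimes K)^\times/K^\times(L\otimes K)^{\times 2}$ is trivial.

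The remainder is routine. Given $c\in\Pic(X_K)$, a standard moving lemma (valid since $K$ is infinite) provides a representative $D$ with $\operatorname{supp}(D)\cap\Sigma=\emptyset$; set $C_f(c):=[f(D)]$. If $D'$ is a second such representative then $D-D'=\divv(\psi)$ for some $\psi\in K(X)^\times$ with $\operatorname{supp}(\divv\psi)\subseteq\operatorname{supp}(D)\cup\operatorname{supp}(D')$ disjoint from $\Sigma$, so $[f(D)]=[f(D')]\cdot[f(\divv\psi)]=[f(D')]$ by the previous two paragraphs; thus $C_f$ is well defined, and it is a homomorphism by the same multiplicativity applied to representatives that jointly avoid $\Sigma$. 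Functoriality in $K/k$ is then automatic: for $K'\supseteq K$ one evaluates $C_f$ on a class of $\Pic(X_K)$ and on its image in $\Pic(X_{K'})$ using the same $K$-rational representative $D$ — still disjoint from $\Sigma$, which is defined over $k$ — and $f(D)$ is literally the same element, now read in the larger group. I expect the only genuine point requiring care to be the bookkeeping that keeps the identity $\divv(f_\delta)=2\beta_\delta-\kappa_X$ Galois-equivariant across the factors of $L$, so that the $\psi(\beta_\delta)$ really assemble into an element of $(L\otimes K)^\times$ and the $\psi(\kappa_X)$ into a scalar in $K^\times$ — which is exactly what forces the target to be taken modulo $K^\times$.
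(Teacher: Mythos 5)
Your proof is correct and follows the same route as the paper's (very terse) argument: multiplicativity on divisors avoiding the support of $f$, Weil reciprocity to handle principal divisors, and a moving lemma to extend to all of $\Pic(X_K)$. Your explicit computation $f_\delta(\divv\psi)=\psi(\beta_\delta)^2\,\psi(\kappa_X)^{-1}$, together with the observation that $(\psi(\beta_\delta))_\delta$ assembles to an element of $(L\otimes K)^\times$ while $\psi(\kappa_X)$ is a scalar in $K^\times$, is precisely the detail the paper elides by citing \cite[6.3.2]{BPS} and \cite[Lemma 4.3]{CreutzGenJacs}.
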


\begin{proof}
	The rule in the statement defines a homomorphism on the subgroup of $\Div(X_K)$ of divisors with support disjoint from the support of $f$. Using Weil reciprocity one checks that $f(D)$ depends only on the class of $D$ in $\Pic(X_K)$. Since every divisor on $X_K$ is linearly equivalent to one whose support is disjoint from the support of $f$, the homomorphism extends in a unique way to $\Pic(X_K)$. See \cite[6.3.2]{BPS} and \cite[Lemma 4.3]{CreutzGenJacs} for more details.
\end{proof}

\begin{Definition}\label{def:VBS}
Let $i \ge 0$, let $S$ be a finite set of primes of $k$, and let $B \subset (L^\times/L^{\times 2})_{S,N=1}$ be any subgroup.
For $Y\subset \Pic_X^i$ we define
\begin{align*}
	V_{B,S}(Y) &:= \left\{ (\ell_v) \in \prod_{v \in S} \frac{L_v^\times}{k_v^\times L_v^{\times 2}} \;\colon\; 
    \begin{aligned}
        &\forall\,b \in B\,,\, \langle (\ell_v),b\rangle_S = 0, \text{ and} \\
        &\forall\, v\in S\,,\, \ell_v \in C_f(Y(k_v)')
    \end{aligned} 
    \right\}\,.
\end{align*}
\end{Definition}

\begin{Theorem}\label{newthm1}
	With the notation of Definition~\ref{def:VBS}, assume that for all $v \notin S$ the image of $C_f \colon Y(k_v)' \to L_v^\times/k_v^{\times}L_v^{\times 2}$ is unramified. Then $C_f(Y(k)')\subset V_{B,S}(Y)$, and hence $Y(k)'$ is empty if $V_{B,S}(Y)$ is empty.
	Furthermore, if $V_{B,S}(X)=\emptyset$ then $X(k)=\emptyset$. 
\end{Theorem}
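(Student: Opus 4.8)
The plan is to run an arbitrary element of $Y(k)'$ through the descent map $C_f$, restrict to the places in $S$, and verify that the resulting tuple satisfies the two conditions defining $V_{B,S}(Y)$; the emptiness assertions are then formal, and the statement about $X(k)$ reduces to comparing $X(k)$ with $X(k)'$.

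First I would fix $D \in Y(k)'$ and set $c := C_f(D) \in L^\times/k^\times L^{\times 2}$, using Lemma~\ref{lem:descentmap} with $K = k$ (recall that $C_f$ is defined on all of $\Pic(X_k)$, so no disjointness hypothesis on supports is needed). For each prime $v$ of $k$, the base-change map $\Pic(X_k) \to \Pic(X_{k_v})$ sends $D$ to an element $D_v$ lying in $Y(k_v)'$, and functoriality of $C_f$ in the extension $k_v/k$ gives $\res_v(c) = C_f(D_v) \in C_f(Y(k_v)')$. For $v \notin S$ the hypothesis says this class is unramified, so by the definition of the unramified $S$-part, $c$ lies in $(L^\times/k^\times L^{\times 2})_S$.

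Next I would invoke Theorem~\ref{thm:pairing}\eqref{pairing2}: the image of the restriction map $(L^\times/k^\times L^{\times 2})_S \to \prod_{v \in S} L_v^\times/k_v^\times L_v^{\times 2}$ is precisely the left kernel of the pairing $\langle\,,\rangle_S$. Hence the tuple $(\res_v(c))_{v \in S}$, being in that image, pairs to $0$ against every element of $(L^\times/L^{\times 2})_{S,N=1}$, and in particular against every $b \in B$. Combined with the previous step — which shows $\res_v(c) \in C_f(Y(k_v)')$ for all $v \in S$ — the tuple $(\res_v(c))_{v \in S}$ satisfies both conditions of Definition~\ref{def:VBS}, so it lies in $V_{B,S}(Y)$. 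Since $D$ was arbitrary, $C_f(Y(k)') \subseteq V_{B,S}(Y)$; in particular, if $V_{B,S}(Y) = \emptyset$ then $Y(k)' = \emptyset$. For the final assertion I would take $Y = X$, viewed in $\Pic^1_X$ as in the paper (the identification $P \mapsto [P]$ being legitimate since $X$ has genus $3$). Under this identification a rational point $P \in X(k)$ corresponds to the class of the $k$-rational degree-$1$ divisor $P$, so $X(k) \subseteq \Pic^1(X_k)$, i.e.\ $X(k) = X(k)'$; hence $V_{B,S}(X) = \emptyset$ forces $X(k)' = \emptyset$, and thus $X(k) = \emptyset$.

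The genuinely substantial input — that the left kernel of the Hilbert-symbol pairing on the $S$-parts coincides with the image of the restriction map — is already isolated in Theorem~\ref{thm:pairing}, so nothing hard remains here. The one point demanding care is the compatibility of $C_f$ with passage to completions, which is exactly why Lemma~\ref{lem:descentmap} is phrased functorially in $K/k$; given that, the rest is bookkeeping about the definition of the unramified $S$-part and the identification $X(k) = X(k)'$.
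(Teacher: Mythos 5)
Your proof is correct and follows essentially the same route as the paper's: both pass $C_f(Y(k)')$ into $(L^\times/k^\times L^{\times 2})_S$ using the unramifiedness hypothesis, then invoke Theorem~\ref{thm:pairing}\eqref{pairing2} to get orthogonality to $B$, and combine with local compatibility of $C_f$ to land in $V_{B,S}(Y)$; the paper encapsulates this in a commutative diagram whereas you unwind the two membership conditions of Definition~\ref{def:VBS} separately, but the content is identical, including the observation that $X(k)=X(k)'$ because a point is the unique effective representative of its degree-$1$ class.
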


\begin{proof}
	By assumption the image of $C_f\colon Y(k)' \to L^\times/k^\times L^{\times 2}$ is unramified outside $S$.
	By~Theorem~\ref{thm:pairing} we have the following commutative diagram of set maps whose bottom row is an exact sequence of $\F_2$-vector spaces:
	
	\[\begin{tikzcd}
		Y(k)' \arrow[d,"C_f"] \arrow[rr] && \displaystyle\prod_{v \in S} Y(k_v)' \arrow[d,"\prod C_f"] \\
		\left(\dfrac{L^\times}{k^\times L^{\times 2}}\right)_S \arrow[rr,"\prod \res_v"] && {\displaystyle\prod_{v \in S} \dfrac{L_v^\times}{k_v^\times L_v^{\times 2}}} \arrow[r] &  \left(\dfrac{L^\times}{L^{\times 2}}\right)^*_{S,N=1}\,.
	\end{tikzcd}
	\]
	The second map in the bottom row is induced by $\langle\,,\,\rangle_S$. It follows that the image of any $x \in Y(k)'$ in $\prod_{v \in S} \frac{L_v^\times}{k_v^\times L_v^{\times 2}}$ must lie in $V_{B,S}(Y)$. Hence, if $V_{B,S}(Y) = \emptyset$, then we must have $Y(k)' = \emptyset$.
	
	Note that points on $X$ are the unique effective representatives in their degree $1$ divisor class, so $X(k)=X(k)'$, giving us the last claim.
\end{proof}

\begin{Algorithm}\label{alg1}~

\noindent\textbf{Input:} $g\in\calO_k[x,y,z]$, the $k$-algebra $L$, a set $S$, $b_1,\ldots,b_n\in L^\times$ generators for a subgroup $B \subset (L^\times/L^{\times 2})_{S,N=1}$, and $Y$ as in Definition~\ref{def:VBS}.

\noindent\textbf{Output:} The set $V_{B,S}(Y)$.

\begin{enumerate}
	\item\label{step1} Compute bases for $V = \prod_{v\in S}L_v^\times/k_v^\times L_v^{\times 2}$ and $W = \prod_{v\in S}(L_v^\times/L_v^{\times 2})_{N=1}$ as well as a function representing the map $L^\times \to \prod_{v \in S}L_v^\times/L_v^{\times 2}$.
	\item\label{step2} Find the matrix representing the pairing $\langle\,,\,\rangle_S \colon V \times W \to \Q/\Z$ of Theorem~\ref{thm:pairing} with respect to these bases.
	\item\label{step3} For each $i = 1,\dots, n$, find the image of $b_i$ in $W$ and its orthogonal complement $V^i \subset V$ with respect to the pairing. 
	\item\label{step4} Let $V_0 = \bigcap_{i = 1}^n V^i$.
	\item\label{step5} For each $v \in S$ compute the image $I_v$ of $C_f \colon Y(k_v)' \to L_v^\times/k_v^{\times}L_v^{\times 2}$.
	\item\label{step6} Return $V_{B,S}(Y) = V_0 \cap \prod_{v\in S}I_v$.
\end{enumerate}
\end{Algorithm}

\begin{Remark} We comment on various steps in the algorithm
\begin{enumerate}
\item[(\ref{step1})] This is a standard operation. For instance, in Magma this is accomplished with \texttt{pSelmerGroup}. The Norm map $L_v^\times/L_v^{\times 2}\to k_v^\times/k_v^{\times 2}$ can be computed by taking norms of elements representing the basis elements.
\item[(\ref{step2})] We compute Hilbert symbols between pairs of basis elements.
\item[(\ref{step3}), (\ref{step4})] These steps reduce to linear algebra over $\F_2$.
\item[(\ref{step5})] We use the algorithm of~\cite[12.6.7]{BPS}.
\end{enumerate}
\end{Remark}

\begin{Remark}\label{R:YkEmpty} If $S$ contains all real or dyadic primes, all primes of bad reduction for the model $g(x,y,z) = 0$, and all primes below primes of bad reduction for the model $ux+vy+wz =0$ of the generic bitangent, then by \cite[Lemma 12.13]{BPS} we have that the image of $C_f \colon \Pic(X) \to L^\times/k^{\times}L^{\times 2}$ lies in the unramified outside $S$ subgroup. Thus, Theorem~\ref{newthm1} applies and Algorithm~\ref{alg1} may allow one to compute that $Y(k)'$ is empty.
\end{Remark}

\begin{Remark}\label{rmk:unram} In practice one can often find a smaller set $S$ than that mentioned in Remark~\ref{R:YkEmpty}. Moreover, if the subgroup $B$ is unramified at some prime $v \in S$, then it may be possible to avoid computing the full local image at $v$ in Step~\ref{step5} of the algorithm.
\begin{enumerate}[wide]
	\item The linear form $ux+vy+wz \in \calO_L[x,y,z]$ with divisor $2\beta \times \Delta$ (the generic bitangent) is only determined up to scaling by an element of $L^\times$. The map $C_f$ in Lemma~\ref{lem:descentmap} depends on the choice of scalar, but the restriction of $C_f$ to $\Pic^0(X_K)$ does not. By factoring the $\calO_L$-ideal $\langle u,v,w \rangle$ and finding generators for principal ideals supported on its prime factors one may be able to scale $ux+vy+wz$ so that the ideal generated by its coefficients contains fewer or smaller primes.
	\item If $v$ is an odd prime such that the Tamagawa number of the Jacobian $J = \Pic^0_X$ is odd, then the image of $C_f \colon \Pic^0(X_{k_v}) \to L_v^\times/k_v^\times L_v^{\times 2}$ is unramified. This follows from \cite[Lemma 7.1 and Lemma 10.5(a)]{BPS}. For such primes it follows that for any $i \ge 0$, the image of $\Pic^i(X_{k_v})$ is contained in a coset of the unramified subgroup of $L_v^{\times}/k_v^\times L_v^{\times 2}$. In particular, one can check if the image is unramified by computing the image of a single divisor class.
	\item Suppose $v$ is an odd prime such that the Tamagawa number is odd and the subgroup $B \subset (L^\times/L^{\times 2})_S$ is unramified at $v$. In this case it is not necessary to compute the full image of $C_f \colon Y(k_v)' \to L_v^\times/k_v^{\times}L_v^{\times 2}$. Indeed, $B$ is orthogonal to the unramified subgroup and $C_f(Y(k_v)')$ is contained in a coset of this. So the pairings $\langle C_f(x_v), \res_v(b) \rangle_v$ in~\eqref{Hilb2} do not depend on $x_v \in Y(k_v)'$. This means that $V_{B,S}(Y)$ is either empty or its image in $L_v^\times/k_v^\times L_v^{\times 2}$ is the entire image $C_f(Y(k_v)')$. Knowing the image of a single point in $Y(k_v)'$ will allow us to distinguish these cases. Particularly for large primes where computing the local image may be expensive, this can dramatically improve efficiency of the algorithm.
	\end{enumerate}
\end{Remark}

\subsection{Generalization to arbitrary curves}\label{sec:General}
Algorithm~\ref{alg1} above can be generalized to any smooth, projective and geometrically irreducible curve $X/k$ of genus at least $2$ as follows.\\

\noindent{\bf Hyperelliptic Curves:} If $X$ is hyperelliptic of genus $g$ and has points in all completions of $k$, then there is a degree $2$ map $X \to \PP^1$, which we may assume is not ramified at $\infty \in \PP^1$. Then $X$ has an affine model of the form $y^2 = h(x)$ with $h(x) \in \calO_k[x]$ square-free of even degree. Let $\Delta = \Spec(L)$ denote the $k$-scheme of ramification points of the map $X \to \PP^1$, so $L = k[x]/(h(x))$ and let $f = (x-\theta) \in k(X\times \Delta)$ where $\theta$ denotes the image of $x$ in $L$. In this case $[L:k]=2g+2$.\\

\noindent{\bf Nonhyperelliptic Curves:} Let $\Delta = \Spec(L)$ denote the $k$-scheme of odd theta characteristics on $X$. These are classes in $\Pic(\Xbar)$ of divisors $\beta_\delta \in \Div(\Xbar)$ such that $2\beta_\delta$ lies in the canonical class. Thus, there is a function $f \in k(X \times \Delta)^\times$ with divisor $\divv(f) = 2\beta - D \times \Delta$ for some $D \in \Div(X)$ representing the canonical class. In this case $[L:k]=2^{(g-1)}(2^g-1)$.\\

In both cases the data $(2,\Delta,[\beta])$ defines a `fake descent setup' for $X$ (see \cite[Definition 6.7]{BPS}). Lemma~\ref{lem:descentmap} goes through as stated using the function $f \in k(X\times \Delta)$. It is not difficult to show in the general situation that there is an explicitly computable finite set $S$ of primes of $k$ for which the local images at primes not in $S$ are all unramified. Thus, with the obvious modifications, one obtains an algorithm that takes as input a subgroup $B \subset \left(L^\times/ L^{\times 2}\right)_{S,N = 1}$ and computes a (possibly empty) subset $V_{B,S}(Y) \subset \prod_v L_v^\times/k_v^\times L_v^{\times 2}$ which contains the image of $Y(k)'$. In the hyperelliptic case, this is essentially \cite[Algorithm 5.1]{CreutzSrivastava}. The proof of correctness given there involves the Brauer group of $X$, which we have thus far avoided here. In Section~\ref{sec:Descent} we relate $V_{B,S}(Y)$ from Definition~\ref{def:VBS} to the descent and Brauer-Manin obstructions. 

\section{Examples of obstructions to rational points on quartic curves}\label{sec:examples}

Our implementation of the algorithm and code verifying these examples can be found at~\cite{Github}.

\subsection{An example from \cite{BPS} revisited}
The following was considered in \cite[Prop. 12.20]{BPS} where it was proved that $X(\Q)=\emptyset$ conditionally on GRH. As they note, removing the conditional part following their approach would require verifying the class group computation of the integer ring of a degree $28$ number field with Minkowski bound exceeding $10^{22}$. By way of contrast the approach described in this paper gives the same result unconditionally.
\begin{Example}
	Let $X$ be the curve in $\PP^2_\Q$ defined by \[x^4 + y^4 + x^2yz + 2xyz^2 - y^2z^2 + z^4 = 0.\] Then for all $v \le \infty$ we have $X(\Q_v) \ne \emptyset$, but $X(\Q) = \emptyset$.
\end{Example}

\begin{proof}[Sketch of proof]
	One easily checks that $X(\R) \ne \emptyset$ and that $X(\Q_v) \ne \emptyset$ for $v < 37$ using Hensel's lemma. The discriminant of this quartic form is $-2^8\cdot 5^2\cdot 1361\cdot 97103$. In particular, the discriminant has valuation at most $1$ for all primes larger than $37$. So, it follows from~\cite[Lemma 12.14]{BPS} that $X(\Q_v) \ne \emptyset$ for all $v \ge 37$. Hence $X$ is locally soluble.
	
	The bitangent algebra $L$ of $X$ is a degree $28$ number field, with ring of integers $\calO_L$ of discriminant  $2^{30}\cdot 5^{10}\cdot 1361^6\cdot 97103^6$. We find a linear form $ux + vy + wz \in \calO_L[x,y,z]$ defining the generic bitangent, such that the $\calO_L$-ideal $\langle u,v,w\rangle$ is supported on primes above primes in the set $S_0 = \{ \infty, 2 \}$. Therefore, by Remark~\ref{R:YkEmpty} the set $S = \{ \infty, 2,5,1361,97103\}$ satisfies the conditions in Theorem~\ref{newthm1}. To prove $X(\Q) = \emptyset$, we compute the set $V_{B,S}(X)$ where $B$ is a subset of $(\calO_{L,S_0}^\times/\calO_{L,S_0}^{\times 2})_{N = 1} \subset (L^\times/L^{\times 2})_{S,N=1}$. We obtain generators for $B$ from the relation matrix obtained when computing the conditional class group of $\calO_L$ as described in Section~\ref{sec:findingelements}. We expect that $B$ generates $(\calO_{L,S_0}^\times/\calO_{L,S_0}^{\times 2})_{N = 1}$, but this is only verified assuming GRH.
	
	Since the discriminant has valuation $1$ at the primes $1361$ and $97103$, the Tamagawa numbers here are odd by~\cite[Remark 12.11]{BPS}. The Tamagawa number at $p = 5$ is also odd, as can be verified by computing the component group of a regular model of $X$ using Magma's intrinsic \texttt{RegularModel}. Following Remark~\ref{rmk:unram}(2) we check that $C_f(X(\Q_v))$ is contained in the unramified subgroup. It follows that for any $(\ell_v) \in \prod_{v \in S} C_f(X(\Q_v))$ and any $b \in B$ we have $\langle (\ell_v), b \rangle_S = \langle \ell_2 , b \rangle_2 + \langle \ell_\infty, b\rangle_\infty$.
	
	We compute the images of $X(\R)$ and $X(\Q_2)$ under $C_f$ using the algorithm of~\cite[12.6.7]{BPS}. The map $C_f$ is constant on $X(\R)$ since $X(\R)$ consists of a single connected component. We find $4$ values in the image of $C_f\colon X(\Q_2) \to L_2^\times/\Q_2^\times L_2^{\times 2}$. For each of the four elements $(\ell_2,\ell_\infty) \in C_f(X(\Q_2))\times C_f(X(\R))$ we find some $b \in B$ such that $\langle \ell_2,b \rangle_2 + \langle \ell_\infty, b \rangle_\infty = 1/2$. It follows that $V_{B,S}(X) = \emptyset$ and so $X(\Q) = \emptyset$ by Theorem~\ref{newthm1}. Full computational details are too voluminous to reproduce here, but code to generate the required data and verify the claims made here is available in \cite{Github}*{\texttt{BMQExamples.m}}.
\end{proof}

\begin{Remark}\label{rem:notall}
	Assuming GRH, the subgroup $B$ used in the preceding example is the subgroup
	\[
		B = (\calO_{L,S_0}^\times/\calO_{L,S_0}^{\times 2})_{S_0,N = 1} \subset (\calO_{L,S}^\times/\calO_{L,S}^{\times 2})_{S,N = 1} \subset (L^\times/L^{\times 2})_{S,N=1}\,.
	\]
	There are $18$ primes of $\calO_L$ above the primes in $S \setminus S_0 = \{ 5,1361,97103 \}$, so the first containment above has index at least $2^{18-3} = 2^{15}$. Moreover, $\operatorname{Cl}(\calO_{L,S_0})[2] = \operatorname{Cl}(\calO_{L,S})[2] \simeq \Z/2\Z$. So, by~\eqref{eq:ClassGroup}, the set $B$ has index $2$ in $(L^\times/L^{\times 2})_{S_0,N=1}$ and index at least $2^{16}$ in $(L^\times/L^{\times 2})_{S,N=1}$. In particular, we did not need all of $(L^\times/L^{\times 2})_{S,N=1}$ to find an obstruction. In fact, since the local image $C_f (X(\Q_2)) \times C_f(X(\R))$ has size $4$, it is clear in retrospect that the minimal subsets of $B_0 \subset (L^\times/L^{\times 2})_{S_0,N=1}$ for which $V_{B_0,S}(X) = \emptyset$ have size at most $4$.
\end{Remark}

\subsection{A database of plane quartics of small discriminant}
The database described in \cite{QL} contains $82241$ smooth plane quartic curves over $\Q$ with discriminant less than $10^7$. Among these there are $148$ curves that are everywhere locally solvable, but contain no $\Q$-rational points of small height. Using the algorithm described in this paper we found an obstruction to the existence of rational points for over 90\% of these, yielding $135$ plane quartic curves that are counterexamples to the Hasse principle. This leaves only $13$ curves in the database where the techniques described in this paper are not able to decide on existence of rational points. Equations for these curves can be found in~\cite[BMQexamples.m]{Github}.

We initially computed $V_{B,S}(X)$ for each of the $148$ curves, taking $S$ to be the minimal set of primes satisfying the conditions of Theorem~\ref{newthm1} and $S_0\subset S$ the primes above $2$ and those where the Tamagawa number is even. We computed the set $B\subset (L^\times/L^{\times 2})_{N=1,S_0}$, with equality conditional on GRH. This computation took approximately 24 hours using eight 2.6GHz CPUs (67 hours cpu time - parallelization was used for generating the relation matrices, but the rest was carried out using a single core) and found obstructions for $131$ of the curves. For the $17$ curves remaining we ran the algorithm again now taking $S$ and $S_0$ to include additional primes up to $5000$. This found four additional curves with an obstruction. 

\subsubsection{Using good primes}
The largest `good prime' which played a decisive role when rerunning the algorithm on these $17$ curves was $v = 11$, for the curve
\[
	X \colon x^4 + xy^3 + 4y^4 + x^3z + 2x^2z^2 - 2xyz^2 - 4y^2z^2 + xz^3 + 
z^4 = 0
\]
of discriminant $6037072 =  2^4\cdot 127 \cdot 2971$. For this curve, the Tamagawa numbers are all odd and we found a function $f$ such that the local images of $C_f$ are unramified at all odd primes. With $S$ the set of primes above $\{ \infty, 2, 127, 2971 \}$, $S_0$ the set of primes above $\{2,\infty\}$, and $B$ a subgroup of $(L^\times/L^{\times 2})_{N=1,S_0}$ equal to the whole group conditionally on GRH, we found $V_{B,S}(X) \ne \emptyset$. However, when including the primes above $11$ in $S$ and $S_0$ we found $V_{B,S}(X) = \emptyset$. One can give a geometric interpretation of this phenomenon using the torsors of type $\lambda_\Delta$ defined in Section~\ref{sec:setups} below. For this curve, there is a torsor $Z \to X$ of type $\lambda_\Delta$ which is locally soluble at all primes other than $v = 11$.

\subsubsection{Testing}
We have also run our code on thousands of curves in the database that do have obvious rational points and checked that the images of these rational points under $X(\Q) \stackrel{C_f}\longrightarrow (L^\times/k^\times L^{\times 2})_S \stackrel{\res_S}\longrightarrow \prod_{v \in S} L_v^\times/k_v^\times L_v^{\times 2}$ land in the computed set $V_{B,S}(X)$. 

\section{Computing the index of a plane quartic}\label{sec:Index}

The \defi{index} of a variety $X$ over a field $K$ is the GCD of the degrees of the closed points on $X$. If $X/K$ is a smooth geometrically integral plane quartic curve the index divides $4$. It is equal to $1$ if and only if $\Pic^1(X) \ne \emptyset$ and equal to $4$ if and only if $\Pic^2(X) = \emptyset$. If one has $\Pic^1(X) = \emptyset$, then the index may be either $2$ or $4$. 

\begin{Lemma}\label{L:curve_index}
	Let $X/K$ be a smooth plane quartic curve of index $I(X)$ over a field $K$.
	\begin{enumerate}
		\item If $I(X) = 1$, then there is a closed point of degree $1$ or $3$ on $X$.
		\item If $I(X) = 2$, then there is a closed point of degree $2$ or $6$ on $X$.
	\end{enumerate}
\end{Lemma}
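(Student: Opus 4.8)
The plan is to use Riemann–Roch on $X$ --- a smooth plane quartic, hence a curve of genus $3$ --- to replace a $K$-rational divisor of the prescribed degree by an effective one, and then to read off a closed point of controlled degree from its support. First I would record two elementary consequences of the definition of the index. Writing $d = I(X)$, the subgroup of $\Z$ generated by the degrees of the closed points of $X$ is exactly $d\Z$; hence (a) there is a $K$-rational divisor $D_0 \in \Div(X)$ with $\deg D_0 = d$, and (b) every closed point of $X$ has degree divisible by $d$. I would also note that, by Riemann–Roch on the genus-$3$ curve $X$, every $K$-rational divisor $D$ satisfies $\dim |D| \ge \deg D - 3$, so the linear system $|D|$ is nonempty as soon as $\deg D \ge 3$; a point of $|D|$ is an effective $K$-rational divisor linearly equivalent to $D$.

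For part (1), take $d = 1$ and a $K$-rational divisor $D_0$ of degree $1$, and set $D = 3D_0$. Then $\deg D = 3$, so $D$ is linearly equivalent over $K$ to an effective $K$-rational divisor $E$, which we may write as $E = \sum_i n_i P_i$ with the $P_i$ distinct closed points, each $n_i \ge 1$, and $\sum_i n_i \deg P_i = 3$. If some $P_i$ has degree $1$ it is the desired closed point; otherwise every $\deg P_i \ge 2$, which forces $\sum_i n_i = 1$, so $E = P_1$ is a single closed point of degree $3$.

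For part (2), take $d = 2$ and a $K$-rational divisor $D_0$ of degree $2$, and set $D = 3D_0$. Then $\deg D = 6 \ge 3$, so again $D$ is linearly equivalent to an effective $K$-rational divisor $E = \sum_i n_i P_i$ with $\sum_i n_i \deg P_i = 6$. By (b), every $\deg P_i$ is even, hence $\ge 2$. If some $P_i$ has degree $2$ we are done; otherwise every $\deg P_i \ge 4$, so $6 = \sum_i n_i \deg P_i \ge 4 \sum_i n_i$ forces $\sum_i n_i = 1$, whence $E = P_1$ is a single closed point of degree $6$.

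The only point that needs any care is the passage from an effective $K$-rational divisor to its decomposition into closed points, together with the divisibility constraint (b); this is entirely elementary, so I anticipate no genuine obstacle --- the content of the lemma is really just Riemann–Roch plus the two bookkeeping observations about the index. (One could equally phrase part (1) by noting $I(X)=1$ gives $\Pic^3(X)\neq\emptyset$ and part (2) by noting $I(X)=2$ gives $\Pic^6(X)\neq\emptyset$, but passing through $3D_0$ makes the effectivity step immediate.)
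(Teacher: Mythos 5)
Your proof is correct and follows essentially the same strategy as the paper: pass to a $K$-rational divisor of degree $3$ (respectively $6$), invoke Riemann--Roch on the genus-$3$ curve to replace it by an effective divisor, and then use the divisibility constraint that every closed point has degree divisible by $I(X)$ to read off the conclusion. The only cosmetic difference is in part (2), where you use $3D_0$ (degree $6$) while the paper uses $D_0 + \kappa_X$ (also degree $6$); either choice serves the same purpose and the argument is otherwise identical.
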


\begin{proof}
	If $I(X) = 1$, then $X$ contains a $K$-rational divisor $D$ of degree $1$. Then $3D$ has degree $3$ and is linearly equivalent to an effective divisor by Riemann-Roch. This divisor is either a closed point of degree $3$ or contains a closed point of degree $1$ in its support. If $I(X) = 2$, then $X$ contains a $K$-rational divisor $D$ of degree $2$. Then $D + \kappa_X$ has degree $6$ and is linearly equivalent to an effective divisor, again by Riemann-Roch. Either this is a closed point of degree $6$ or contains a closed point of degree $2$ in its support. Note that it cannot contain any divisors of odd degree in its support because the index is $2$.
\end{proof}

For a local field $K$, there are only finitely many extensions of $K$ of any given degree. Using Hensel's lemma one can check for rational points over these extensions. This gives a practical algorithm to compute the index of a plane quartic curve over a local field. Given a plane quartic curve over a global field, one can compute the indices of $X$ over all of the completions (for primes of good reduction the index is 1 because the curve has effective divisors of any sufficiently high degree, thanks to the Weil bounds). The least common multiple of the local indices gives a lower bound for the global index. For a plane quartic curve, the least common multiple of the local indices is equal to the maximum local index, since the indices divide $4$ which is a prime power.

Finding that $V_{B,S}(\Pic^i_X)=\emptyset$  allows us in many cases to obtain a lower bound for the index that is larger by a factor of $2$.

\begin{Proposition}\label{prop:index}
	Let $X/k$ be a smooth quartic curve over a number field $k$ and $B,S$ as in Algorithm~\ref{alg1} satisfying the conditions of Theorem~\ref{newthm1}. 
	\begin{enumerate}
		\item If the maximum local index of $X$ is $1$ and $V_{B,S}(\Pic^1_X) = \emptyset$, then $I(X) \in \{2,4\}$. 
		\item If the maximum local index of $X$ is $2$ and $V_{B,S}(\Pic^2_X) = \emptyset$, then $I(X) = 4$.
	\end{enumerate}
\end{Proposition}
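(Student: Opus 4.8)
The plan is to derive both parts straight from Theorem~\ref{newthm1}, applied with $Y=\Pic^1_X$ for part~(1) and $Y=\Pic^2_X$ for part~(2), using only the elementary dictionary between the index of $X$ and the nonemptiness of $\Pic^1(X_k)$ and $\Pic^2(X_k)$. The key preliminary observation is that for $Y=\Pic^i_X$ one has $Y(k)'=Y(k)\cap\Pic^i(X_k)$ equal to the image of the natural injection $\Pic^i(X_k)\hookrightarrow\Pic^i_X(k)$, so $Y(k)'$ is nonempty exactly when $X$ admits a $k$-rational divisor of degree $i$. Combined with Theorem~\ref{newthm1}, which gives $C_f(Y(k)')\subseteq V_{B,S}(Y)$ and hence $Y(k)'=\emptyset$ whenever $V_{B,S}(Y)=\emptyset$, this immediately yields: $V_{B,S}(\Pic^i_X)=\emptyset\Rightarrow\Pic^i(X_k)=\emptyset$, i.e. $X$ has no $k$-rational divisor of degree $i$.

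For part~(1): by the text preceding the proposition, $I(X)=1$ if and only if $\Pic^1(X_k)\ne\emptyset$. If $I(X)=1$ then $\Pic^1_X(k)'\ne\emptyset$, so $\emptyset\ne C_f(\Pic^1_X(k)')\subseteq V_{B,S}(\Pic^1_X)$, contradicting $V_{B,S}(\Pic^1_X)=\emptyset$. Hence $I(X)\ne1$; since the canonical class is $k$-rational of degree $4$ we have $I(X)\mid4$, so $I(X)\in\{2,4\}$. The hypothesis that the maximum local index equals $1$ plays no role in the logic — it simply identifies the interesting case, since a completion with local index $2$ or $4$ would already force $I(X)\in\{2,4\}$ or $I(X)=4$; its purpose is to phrase the statement as ``improving the local lower bound by a factor of $2$''.

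For part~(2): again by the text preceding the proposition, $I(X)=4$ if and only if $\Pic^2(X_k)=\emptyset$. Applying Theorem~\ref{newthm1} with $Y=\Pic^2_X$: if $\Pic^2(X_k)\ne\emptyset$ then $\emptyset\ne C_f(\Pic^2_X(k)')\subseteq V_{B,S}(\Pic^2_X)$, a contradiction, so $\Pic^2(X_k)=\emptyset$ and $I(X)=4$. Equivalently, one rules out $I(X)\in\{1,2\}$ directly: in either case $X$ carries a $k$-rational divisor of degree $2$ (double a degree-$1$ divisor in the first case), so $\Pic^2(X_k)\ne\emptyset$; since $I(X)\mid4$, only $I(X)=4$ remains. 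Here ``maximum local index $2$'' is the natural hypothesis framing the improvement from a local lower bound of $2$ to the global value $4$.

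The only point requiring genuine care is that Theorem~\ref{newthm1} actually applies to the choices $Y=\Pic^1_X$ and $Y=\Pic^2_X$: one needs that for every $v\notin S$ the local image of $C_f$ on $\Pic^i(X_{k_v})$ is unramified. This is precisely what is being assumed in the phrase ``$B,S$ as in Algorithm~\ref{alg1} satisfying the conditions of Theorem~\ref{newthm1}''; by Remark~\ref{R:YkEmpty} such an $S$ can always be exhibited (and then, since $\Pic^i_X(k_v)'=\Pic^i(X_{k_v})\subseteq\Pic(X_{k_v})$ and $C_f$ is defined on all of $\Pic(X_{k_v})$, unramifiedness of the larger image forces it of the smaller), while Remark~\ref{rmk:unram} controls where the local images must actually be computed. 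Apart from this bookkeeping, and keeping the distinction between $\Pic^i(X_k)$ (classes of $k$-rational divisors) and $\Pic^i_X(k)$ (rational points of the Picard scheme) straight so as to correctly identify $Y(k)'$, the argument is a direct consequence of Theorem~\ref{newthm1} and $I(X)\mid4$; I do not expect any serious obstacle.
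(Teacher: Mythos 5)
The paper gives no explicit proof of Proposition~\ref{prop:index} — it is left as an immediate consequence of Theorem~\ref{newthm1} together with the index dictionary stated just before it (index $1 \iff \Pic^1(X)\ne\emptyset$; index $4 \iff \Pic^2(X)=\emptyset$; index divides $4$). Your proof supplies exactly this chain: $V_{B,S}(\Pic^i_X)=\emptyset \Rightarrow \Pic^i_X(k)'=\Pic^i(X)=\emptyset$ by Theorem~\ref{newthm1} (using the correct identification $\Pic^i_X(k)'=\Pic^i_X(k)\cap\Pic^i(X)=\Pic^i(X)$), then the dictionary gives $I(X)\ne 1$ for part~(1) and $I(X)=4$ for part~(2). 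Your aside that the ``maximum local index'' hypotheses are not logically needed for the implication — since a larger local index already forces the stated conclusion, and these hypotheses merely single out the cases where the global obstruction genuinely improves on the local bound — is also correct; this is precisely what Remark~\ref{rmk:pic12} and the surrounding discussion in the paper are getting at. This is the same argument the paper intends, carried out with the right amount of care about the $\Pic^i(X)$ versus $\Pic^i_X(k)$ distinction.
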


\begin{Remark}\label{rmk:pic12}
	If the maximum local index is $1$, then $V_{B,S}(\Pic^2_X) \ne \emptyset$. The reason for this is as follows. If $D_v \in \Pic^1(X_{k_v})$, then $C_f(2D_v) = 1$ and since $\Pic^2(X_{k_v}) = 2D_v + \Pic^0(X_{k_v})$, it follows that the image of $\Pic^2(X_{k_v})$ under $C_f$ contains $1$ for all $v$. But then $1 \in V_{B,S}(\Pic^2_X) \ne \emptyset$.
\end{Remark}

The next lemma is helpful in computing the local images $C_f(\Pic^2(X_{k_v}))$ at primes $v$ where the index is $2$.

\begin{Lemma}\label{lem:Lichtenbaum}
	Let $X/K$ be a smooth quartic curve of index $2$ over a $p$-adic field $K$. Then the image of $\Pic^0(X)$ in $\Pic_X^0(K)/2\Pic_X^0(K)$ is a subgroup of index $2$. Moreover, 
	\[ \#C_f(\Pic^2(X)) \le \begin{cases} 2^{-1}\cdot\#J(K)[2] & \text{if $v_K(2)=0\,;$}\\ 2^2\cdot\#J(K)[2] & \text{otherwise.}\end{cases}\]
\end{Lemma}

\begin{proof}
	Let $X$ be a smooth quartic curve of index $I$. To ease notation let $J = \Pic_X^0$ and $J^1 = \Pic^1_X$, which is a torsor under $J$. The period $P$ of $X$ is the order of $J^1$ in the group $\HH^1(K,J)$ and this integer divides $I$. Since $K$ is a $p$-adic field, \cite{Lichtenbaum} gives that $I = P$, except possibly when $(g-1)/P$ is odd. In our case the genus $g$ is $3$, so we cannot have $P = 1$ and $I = 2$. Hence $I = P = 2$.
	
	The Tate pairing $J(K)/2J(K) \times \HH^1(K,J)[2] \to \Br(K)[2]$ is nondegenerate. Since $J^1$ is a nontrivial element in $\HH^1(K,J)[2]$, the map $\Theta\colon J(K)/2J(K) \to \Br(K)[2]$ given by pairing with the class of $J^1$ is nonzero. By \cite[Proof of Corollary 1]{Lichtenbaum} this map sits in an exact sequence
	\[
		0 \longrightarrow \frac{\Pic^0(X)}{\Pic^0(X)\cap 2J(K)} \longrightarrow \frac{J(K)}{2J(K)} \stackrel{\Theta}\longrightarrow \Br(K)[2]\,.
	\]
	We conclude that the image of $\Pic^0(X)$ in $J(K)/2J(K)$ has index $2$, since $\Br(K)[2]$ has order $2$.
	
	For the second statement,~\cite[Corollary 6.15]{BPS} shows that the map $C_f$ defined on $\Pic^0(X)$ factors through $\Pic^0(X)/\Pic^0(X)\cap 2J(K)$. So $\#C_f(\Pic^2(X)) = \#C_f(\Pic^0(X)) \le 2^{-1}\#J(K)/2J(K)$. The size of $J(K)/2J(K)$ given in \cite[Remark 11.6]{BPS}, which leads to the bound in the statement.
\end{proof}

\subsection{Examples}

\subsubsection{Local index $4$}
In the database of plane quartic curves over $\Q$ described in~\cite{QL} there is just one curve with maximum local index $4$ (hence also global index $4$). This is the curve of discriminant $2201024$ given by the vanishing of
\[x^4 + x^3y + x^2y^2 + xy^3 + y^4 + x^3z + xy^2z - y^3z + 3x^2z^2 + 
    y^2z^2 + xz^3 - yz^3 + z^4\,.\]
There is a unique prime ($v = 2$) where the local index is $4$, hence this curve is `odd' in the sense of \cite{PoonenStoll}, meaning that $\Pic^2_X$ represents a nontrivial element in the Tate-Shafarevich group of its Jacobian and this group has odd $2$-rank. 

\subsubsection{Local index $2$}
There are 479 curves in the database with maximum local index $2$, of which all but $15$ have obvious quadratic points (hence have global index $2$). To find `obvious' quadratic points we searched for points on $X$ of small height over fields $\Q(\sqrt{d})$ with $|d| < 200$. In contrast to the situation for $\Q$-points where the lack of obvious points gives high confidence that there are in fact no points, it is plausible that some of these $15$ curves have quadratic points over larger quadratic fields. In the example below we use the methods in this paper on one of those $15$ curves to prove that the global index is in fact $4$, despite there being no local obstruction to having index $2$.

\begin{Proposition}
	Let $X \subset \PP^2_\Q$ be the curve given by the vanishing of
	\begin{multline*}
    x^4 + x^3y + 4x^2y^2 + 2xy^3 + 3y^4 + 2x^3z + 2x^2yz + 2xy^2z \\
    + 3y^3z + x^2z^2 + 7xyz^2 + 3y^2z^2 - 4xz^3 - 6yz^3 + 2z^4
\end{multline*}
	Then $X$ is a smooth genus $3$ curve of index $4$, but for every prime $v \le \infty$ the index of $X_{\Q_v}$ is at most $2$.
\end{Proposition}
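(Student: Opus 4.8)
The plan is to reduce the statement to Proposition~\ref{prop:index}(2). There are three things to do: check that $X$ is smooth of genus $3$; show that the index of $X_{\Q_v}$ is at most $2$ at every place and exactly $2$ at one place, so that the maximum local index of $X$ is $2$; and produce a finite set $S$ of primes together with square-norm $S$-units $B$ for which $V_{B,S}(\Pic^2_X)=\emptyset$.

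Smoothness and the genus are routine: one computes the discriminant of the given quartic form, checks that it is a nonzero integer, and records its factorization; a smooth plane quartic has genus $3$. For the local indices, every prime $v\ge 37$ of good reduction gives local index $1$ by Hensel's lemma and the Weil bounds, and the real place gives local index at most $2$ because a pair of complex conjugate points is an $\R$-rational divisor of degree $2$. For the finitely many remaining places — the primes dividing the discriminant together with the good primes below $37$ — I would run the practical algorithm recalled in Section~\ref{sec:Index}: using Hensel's lemma, test for closed points of $X_{\Q_v}$ of degrees $1$, $2$, $3$ and $6$ over the finitely many extensions of $\Q_v$ of those degrees. By the first Lemma of Section~\ref{sec:Index}, if the local index is $1$ or $2$ then such a point exists, and the degree of a point one finds pins the local index down to $1$ or $2$ (using also that the index divides $4$). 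At every place such a point is found, so all local indices are at most $2$; at one place $v_0$ one finds no $\Q_{v_0}$-rational point but does find a point over a quadratic extension, so the local index there is exactly $2$ and the maximum local index of $X$ is $2$.

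For the global index I would run Algorithm~\ref{alg1} with $Y=\Pic^2_X$, which is legitimate once $S$ is large enough by Remark~\ref{R:YkEmpty}. Concretely: compute the degree-$28$ bitangent algebra $L$, a linear form $ux+vy+wz\in\calO_L[x,y,z]$ cutting out the generic bitangent — rescaled, following Remark~\ref{rmk:unram}(1), so that the ideal $\langle u,v,w\rangle$ is supported over as few small primes as possible — and the associated descent function $f$ of Lemma~\ref{lem:descentmap}. Take $S$ to be the set of primes of $\Q$ forced by the hypotheses of Theorem~\ref{newthm1} (namely $\{\infty,2\}$ together with the bad primes of the two models), enlarged if necessary by a few further small primes, and let $S_0\subset S$ consist of the primes above $2$ and those where the Tamagawa number of $\Pic^0_X$ is even. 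Following Section~\ref{sec:findingelements}, I would read off generators for a subgroup $B\subset(\calO_{L,S_0}^\times/\calO_{L,S_0}^{\times 2})_{N=1}$ of square-norm $S_0$-units from the relation matrix produced during the class- and unit-group computation for $\calO_L$, without completing that computation unconditionally. Next, for each $v\in S$ I would compute the local image $C_f(\Pic^2_X(\Q_v)')$ using \cite[12.6.7]{BPS}: at $v_0$ (and any other prime of local index $2$) Lemma~\ref{lem:Lichtenbaum} does the work, since it shows that the image of $\Pic^0(X_{\Q_{v_0}})$ in $\Pic^0_X(\Q_{v_0})/2\Pic^0_X(\Q_{v_0})$ has index $2$, so the local image of $\Pic^2$ is a single coset of the $C_f$-image of that index-$2$ subgroup; at the other primes of $S$ the local image of $\Pic^2$ contains $1$ (the class of twice a local divisor of degree $1$) and is computed directly. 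Finally I would form the matrix of the pairing $\langle\,,\,\rangle_S$, intersect the orthogonal complements of the images of the generators of $B$ with $\prod_{v\in S}C_f(\Pic^2_X(\Q_v)')$, and check that the resulting set $V_{B,S}(\Pic^2_X)$ is empty. Proposition~\ref{prop:index}(2) then gives $I(X)=4$, which together with the local computation above is the assertion of the Proposition.

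The main obstacle is this last step. Everything hinges on whether the square-norm $S_0$-units that can be read off cheaply from the relation matrix of the degree-$28$ ring $\calO_L$ already generate a subgroup $B$ rich enough that the pairing $\langle\,,\,\rangle_S$ annihilates no element of $\prod_{v\in S}C_f(\Pic^2_X(\Q_v)')$ — and, secondarily, on getting the local images $C_f(\Pic^2_X(\Q_v)')$ exactly right at the bad primes, where Lemma~\ref{lem:Lichtenbaum} is the key input at the prime(s) of local index $2$. If the first $B$ one writes down is too small to force $V_{B,S}(\Pic^2_X)=\emptyset$, the remedy is to enlarge $S$ and $S_0$ by further small primes and try again; in a genuinely unfavourable case one obtains no certificate and the method does not conclude.
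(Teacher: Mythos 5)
Your proposal follows essentially the same route as the paper's proof: verify the local indices are all $\le 2$ (with index exactly $2$ somewhere), then show $V_{B,S}(\Pic^2_X)=\emptyset$ by running Algorithm~\ref{alg1} with $B$ read off a relation matrix for $\calO_L$ and using Lemma~\ref{lem:Lichtenbaum} to get a sharp bound on the local image at the prime of local index $2$, and conclude via Proposition~\ref{prop:index}(2). One wrinkle worth flagging: for this curve $X(\R)=\emptyset$, so $\infty$ is \emph{also} a place of local index $2$, and your blanket claim that the local image of $\Pic^2$ at "the other primes of $S$" contains $1$ fails there; the paper instead notes that $\Pic^{\textup{even}}(X_\R)$ is generated by conjugate pairs of $\C$-points so the image at $\R$ is trivial, and it further chooses $B\subset(L^\times/L^{\times 2})_{\{2,\infty\},N=1}$ so that by Remarks~\ref{rmk:pic12} and~\ref{rmk:unram} the local images at the odd primes $5,29,79$ are automatically unramified and hence orthogonal to $B$, leaving $v=2$ as the only prime where the local image must actually be computed — a substantial saving over computing all local images in $S$ as you propose.
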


\begin{proof}
	It is straightforward to verify that $X(\Q_v) \ne \emptyset$ for all $v \not\in \{ 2 , \infty \}$, and one finds points over quadratic extensions of $\Q_v$ for $v \in \{2, \infty\}$. Hence there is no local obstruction to having index $2$. We will use Algorithm~\ref{alg1} to show that $V_{B,S}(\Pic^2_X) = \emptyset$, where $S = \{2,\infty\}$ and $B$ is a subset of $(L^\times/L^{\times 2})_{S,N=1}$ which generates $\calO_{L,S}^\times/\calO_{L,S}^{\times 2}$ conditionally on GRH.  Proposition~\ref{prop:index} then applies to show that $X$ has index $4$.
		
	The bitangent algebra splits as a product $L = \Q(\sqrt{-5})\times K_4\times K_6 \times K_{16}$, where $K_i$ is a field of degree $i$ over $\Q$. The discriminant of $X$ is $7331200 = 2^7\cdot 5^2 \cdot 29 \cdot 79$ and we find a linear form defining the generic bitangent such that the ideal generated by its coefficients is supported at the primes dividing the discriminant.
	
	For the primes $v \in \{ 5,29,79\}$, $\Pic^1(X_{\Q_v}) \ne \emptyset$, so the local images $C_f(\Pic^2(X_{\Q_v}))$ must contain $1$ by Remark~\ref{rmk:pic12}. The Tamagawa numbers at these primes are odd, so the images of $\Pic^2(X_{\Q_v})$ are contained in some coset of the unramified subgroup by Remark~\ref{rmk:unram}. Since these cosets contain $1$, these images must be unramified. Since $X(\R) = \emptyset$, we have that $\Pic^{\textup{even}}(X_\R)$ is generated by the classes of divisors that are pairs of conjugate $\C$-points. Hence the image of $C_f \colon \Pic^2(X_\R) \to (L\otimes \R)^\times/\R^{\times}(L\otimes \R)^{\times 2}$ is trivial. In the algorithm, we can take $B \subset (L^\times/L^{\times 2})_{\{2,\infty\},N=1}$ and avoid computing local images at the primes other than $v = 2$.
		
	Let us now describe the computation of the local image $C_f(\Pic^2(X_{\Q_2}))$. We compute the images of random elements of $\Pic^2(X_{\Q_2})$ under the map $C_f$, easily finding a set of images with an affine span of size $2^4$, which we expect to be the full image. To verify this we compute an upper bound for the size of the image using Lemma~\ref{lem:Lichtenbaum}. First, we check that $X$ has no points of degree $1$ or $3$ over $\Q_2$ as discussed in the paragraph after Lemma~\ref{L:curve_index}. Then by Lemma~\ref{L:curve_index} the index of $X_{\Q_2}$ is $2$.  As outlined in \cite[Section~12.3]{BPS}, the action of Galois on the bitangents fully determines that Galois structure of $J[2]$. In this case we find that $J[2](\Q_2)$ has order $2^2$. So, by Lemma~\ref{lem:Lichtenbaum}, the size of $C_f(\Pic^2(X_{\Q_2}))$ is at most $2^4$. Since this agrees with the space spanned by the images already found, we have determined $C_f(\Pic^2(X_{\Q_2}))$.
	
	Since the local images are unramified at odd primes and trivial at $\R$, the pairing between $B= (L^\times/L^{\times 2})_{\{2,\infty\},N=1}$ and $\prod_{v \in \{2,\infty\}} C_f(\Pic^2(X_{\Q_v}))$ reduces to the pairing of $B$ with $C_f(\Pic^2(X_{\Q_2}))$. An explicit computation with Hilbert symbols checks that every element of $C_f(\Pic^2(X_{\Q_2}))$ pairs nontrivially with some element of $B$ under $\langle \,,\rangle_2$. We thus conclude that $V_{B,S}(\Pic_X^2)$ is empty and so $\Pic^2(X) = \emptyset$ by Theorem~\ref{newthm1}. Hence the index of $X$ is $4$ by Proposition~\ref{prop:index}.
\end{proof}

\begin{Remark}
	In the proof above, Lemma~\ref{lem:Lichtenbaum} allows us to obtain a sharp upper bound for the size of the local image. In turn, this allows us to compute the local image by considering random points and the image they span. Without a sharp upper bound, one would have to compute the image systematically, which may be impractical.
\end{Remark}

\subsubsection{Local index $1$}
The remaining 81751 curves in the database have maximum local index 1. All but 117 of these have obvious rational or cubic points (and hence have global index $1$). To find `obvious' cubic points we searched for points of small naive height over cubic fields of absolute discriminant at most 4000. Among these 117, all but one have an obvious quadratic or sextic point (and hence have global index in the set $\{1,2\}$). The following is an example of one of these where we can prove that the global index is $2$, despite there being no local obstruction to having index $1$.

\begin{Proposition}\label{prop:Pic1}
	Let $X \subset \PP^2_\Q$ be the curve given by the vanishing of
	\begin{multline*}
		x^4 + 3x^2y^2 - xy^3 + 3y^4 + 3x^3z + 3x^2yz + 4xy^2z + 6y^3z \\ - x^2z^2 + 5xyz^2 - 2y^2z^2 - 4xz^3 - 5yz^3 + 2z^4\,.
	\end{multline*}
	Then $X_{\Q_v}$ has index $1$ for all primes $v$, but $X$ has index $2$.
\end{Proposition}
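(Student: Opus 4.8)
The plan is to mirror the structure of the two preceding index-$4$ propositions, adapting it to the index-$2$ case. First I would establish the local picture: check that $X(\Q_v) \neq \emptyset$ for all $v$ (so the maximum local index is $1$; in particular there is nothing to check about local points over extensions), using Hensel's lemma at all odd primes of good reduction $\ge 37$ via the Weil bounds, and explicit Hensel/search arguments at the remaining primes including $2$, $\infty$, and the primes dividing the discriminant. This gives $X_{\Q_v}$ index $1$ for every $v$. By Proposition~\ref{prop:index}(1), it then suffices to exhibit a set $S$, a subgroup $B$, and a choice of $f$ satisfying the hypotheses of Theorem~\ref{newthm1} for which $V_{B,S}(\Pic^1_X) = \emptyset$; this forces $I(X) \in \{2,4\}$. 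To pin down $I(X) = 2$ rather than $4$, I would separately produce an actual $\Q$-rational divisor class of degree $2$ (equivalently a point in $\Pic^2(X)$), either by exhibiting an explicit effective divisor of degree $2$ on $X$ defined over $\Q$ (e.g.\ a conic or line section giving a Galois-stable pair of points over a quadratic field, or a rational point which automatically gives index dividing $1$ — but since there are no obvious rational points, more likely a sextic or quadratic point found by the search described in the text), or by noting that $V_{B,S}(\Pic^2_X) \neq \emptyset$ is automatic here (Remark~\ref{rmk:pic12}) so no obstruction blocks index $2$, and then confirming with a direct construction.

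For the computation of $V_{B,S}(\Pic^1_X)$, I would follow the recipe used in the index-$4$ example: factor the discriminant of the quartic form, choose the linear form $ux+vy+wz$ defining the generic bitangent so that the $\calO_L$-ideal $\langle u,v,w\rangle$ is supported only on primes above $2$ (and possibly a few discriminant primes), set $S$ to be the primes above $2$, $\infty$, and the odd primes dividing the discriminant, and set $S_0 \subseteq S$ to be the primes above $2$ together with any primes where the Tamagawa number of $J = \Pic^0_X$ is even. I would then compute $B \subseteq (\calO_{L,S_0}^\times/\calO_{L,S_0}^{\times 2})_{N=1}$ from a relation matrix as in Section~\ref{sec:findingelements} (with equality to the full group conditional on GRH, which does not affect correctness of an emptiness conclusion). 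At the odd primes where the Tamagawa number is odd, Remark~\ref{rmk:unram}(2) reduces the local computation: $C_f(\Pic^1(X_{k_v})')$ lies in a coset of the unramified subgroup, and since $\Pic^1(X_{k_v}) \neq \emptyset$ one checks that coset is the unramified subgroup itself; and at $\R$, if $X(\R)$ is a single oval the image is controlled as in the examples. Thus the pairing computation collapses to $v = 2$ (and possibly $\R$): compute $C_f(\Pic^1(X_{\Q_2})')$ via \cite[12.6.7]{BPS}, and check with explicit Hilbert symbols that every element of this local image pairs nontrivially with some element of $B$.

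The main obstacle I expect is \textbf{controlling the size of the local image $C_f(\Pic^1(X_{\Q_2})')$ well enough to compute it completely}. Unlike the index-$2$ example, where Lemma~\ref{lem:Lichtenbaum} supplied a sharp upper bound on $|\Pic^0(X_{\Q_2})/2|$ and hence let one compute the image by spanning with random divisor classes, here we are working with $\Pic^1$ at a prime where $\Pic^1(X_{\Q_2}) \neq \emptyset$; one must still determine the image precisely. I would proceed by computing $|J(\Q_2)[2]|$ from the Galois action on the $28$ bitangents, deduce $|J(\Q_2)/2J(\Q_2)|$ via \cite[Remark 11.6]{BPS}, and then note that $\Pic^1(X_{\Q_2})' = D_v + \Pic^0(X_{\Q_2})$ for a chosen rational divisor class $D_v$ of degree $1$, so the image is a coset of $C_f(\Pic^0(X_{\Q_2}))$; with the cardinality in hand, one spans the image by evaluating $C_f$ on random degree-$0$ classes until the affine span has the predicted size. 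A secondary subtlety is verifying that the chosen $f$ really does make the local images unramified at all odd primes — this requires the scaling trick of Remark~\ref{rmk:unram}(1) and checking Tamagawa numbers (using Magma's \texttt{RegularModel} where the discriminant valuation exceeds $1$). Once the local image at $2$ is known exactly and the pairing against $B$ kills it, Theorem~\ref{newthm1} gives $\Pic^1(X) = \emptyset$, and combined with the explicit degree-$2$ class we conclude $I(X) = 2$.
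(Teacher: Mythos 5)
Your proposal follows essentially the same strategy as the paper's proof: verify local solubility to get local index $1$ everywhere, exhibit an explicit quadratic point (the paper finds $(-\sqrt{3}-1:0:1)$) to bound the index by $2$, and then apply Theorem~\ref{newthm1} with Proposition~\ref{prop:index}(1) by showing $V_{B,S}(\Pic^1_X) = \emptyset$, using the scaling trick for $f$, Remark~\ref{rmk:unram}(2) at the odd primes, and the bound on $|C_f(\Pic^1(X_{\Q_2}))|$ via $J(\Q_2)[2]$ and \cite[Remark 11.6]{BPS} to saturate the local image at $2$ with random divisor classes. The only minor deviations are numerical details you could not have guessed (the bitangent ideal ends up supported above a new prime $2953$ rather than above $2$ or a discriminant prime, $X(\R)$ has two components rather than one, and the paper's $B$ lives in $(L^\times/L^{\times 2})_{\{2,\infty\},N=1}$), none of which affect the correctness of your plan.
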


\begin{proof}
	This curve has $\Q_v$-points for all $v \le \infty$, so the local indices are all $1$. It has the degree $2$ point $( -\sqrt{3}-1 : 0 : 1)$, so the index is at most $2$. 
	
	The bitangent algebra of $X$ is a degree $28$ number field $L$. The discriminant of the defining quartic is $2560173 = 569\cdot 4517$. We find a linear form $ux + vy + wz \in \calO_L[x,y,z]$ defining the generic bitangent such that the ideal $\langle u,v,w\rangle$ is supported on primes above $2953$. Hence the set $S = \{ \infty,2,569,2953,4517\}$ satisfies the conditions of Theorem~\ref{newthm1}. Moreover, the local images $C_f(\Pic^1(X_{\Q_v}))$ are contained in a coset of the unramified subgroup for $v \ne 2,\infty$ by Remark~\ref{rmk:unram}(2). We compute a subgroup $B \subset (L^\times/L^{\times 2})_{\{2,\infty\},N = 1}$, equal to the whole group conditionally on GRH. 
	
	The image of $\Pic^1(X_\R)$ is equal to the image of $X(\R)$, which consists of two connected components. Since $C_f$ is locally constant, we compute the image of $\Pic^1(X_\R)$ by computing $C_f(x)$ for one $x \in X(\R)$ on each component. For $v = 2$, the Galois action on the bitangents shows that $J[2](\Q_2) = 0$, and so by \cite[Remark 11.6]{BPS} the size of $C_f(\Pic^1(\Q_2))$ is bounded above by $2^3$. We find degree $1$ divisors on $X_{\Q_2}$ supported on closed points of degree $1$ and $2$ whose images have an affine span of size $2^3$. Thus we have computed the full local image of $\Pic^1(X_{\Q_2})$. We can then use Remark~\ref{rmk:unram} to compute $V_{B,S}(\Pic^1_X)$ without having to compute the full local image at the other primes of $S$. We find that $V_{B,S}(\Pic^1_X) = \emptyset$ and so $X$ has index $2$ by Proposition~\ref{prop:index}.
\end{proof}

\begin{Remark}\label{rem:Sha}
	It follows from Corollary~\ref{cor:Sha} below that, for the curve $X$ in Proposition~\ref{prop:Pic1}, we have that $\Pic^1_X$ represents a nontrivial element in the Tate-Shafarevich group $\Sha(\Pic^0_X)[2]$ and that $\#\Sha(\Pic^0_X)[2] \ge 4$.
\end{Remark}

\begin{Remark}
	It may be possible to also compute $V_{B,S}(\Pic^1_X)$ for the other 116 curves with maximum local index $1$ and no obvious odd degree points, but we have not yet done so. The main obstacle is the computation of the local images $C_f(\Pic^1(X_{\Q_v}))$. As described in \cite{BPS} this can be done by computing the local images of $X(K)$ for all extensions $K/\Q_v$ of degree up to $3$. One can obtain an upper bound for the size of the image from the Galois action on $J(\Q_v)[2]$ and then compute the images of random points until they span a space of the correct size. However, this upper bound need not always be sharp, in which case one would have to compute the full image of $X(K)$ for all extensions $K/\Q_v$ of degree up to $3$. It is unclear whether this will be feasible in all cases. 
\end{Remark} 
%

\section{Brauer and Selmer sets}\label{sec:Descent}

Let $X/k$ be a smooth, projective and geometrically irreducible nonhyperelliptic curve of genus $\ge 3$ over a number field and $Y \subset \Pic^i_X$ a smooth closed subscheme such that $\Pic^0_Y = \Pic^0_X$. This holds for example if $Y = X$ or $Y = \Pic^i_X$. 
In this section we explain how the set $V_{B,S}(Y)$ computed in Algorithm~\ref{alg1} (and its generalization described in Section~\ref{sec:General}) is related to finite abelian descent and Brauer-Manin obstructions. This section is not needed to prove correctness of the algorithm or for the examples above (other than in Remark~\ref{rem:Sha}), but provides some context for the method as well as insight into which elements in $(L^\times/L^{\times 2})_{S,N=1}$ are (or are not) useful for computing the obstruction.

\subsection{Selmer sets}
Consider the set
\begin{align*}
	\Sel^f(Y) &:= \{ \ell \in L^\times/k^\times L^{\times 2} \::\; \res_v(\ell) \in C_f(Y(k_v)') \text{ for all $v$} \}\,
\end{align*}
from \cite[Definition 9.4]{BPS} and ~\cite[Definition 5.1]{CreutzGenJacs}. This is related to $V_{B,S}(Y)$ as follows.
\begin{Proposition}\label{prop:Sel}
	Let $S,B,V_{B,S}(Y)$ be as in Algorithm~\ref{alg1}.  Assume that the images of $C_f\colon Y(k_v)' \to L_v^\times/k_v^{\times}L_v^{\times 2}$ are unramified at all primes not in $S$. Then the image of $\Sel^f(Y)$ under the restriction map $L^\times/k^\times L^{\times 2} \to \prod_{v \in S}L_v^\times/k_v^\times L_v^{\times 2}$ is contained in $V_{B,S}(Y)$.
\end{Proposition}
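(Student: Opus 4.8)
The plan is to show that any $\ell\in\Sel^f(Y)$ restricts into $V_{B,S}(Y)$ by checking the two defining conditions of $V_{B,S}(Y)$ in Definition~\ref{def:VBS} separately. Write $(\ell_v)_{v\in S}$ for the image of $\ell$ under the restriction map $L^\times/k^\times L^{\times 2}\to \prod_{v\in S} L_v^\times/k_v^\times L_v^{\times 2}$. The second condition, that $\ell_v\in C_f(Y(k_v)')$ for every $v\in S$, is immediate from the definition of $\Sel^f(Y)$, since the defining condition there is that $\res_v(\ell)\in C_f(Y(k_v)')$ for \emph{all} primes $v$, in particular those in $S$.

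The real content is the first condition: $\langle(\ell_v)_{v\in S},b\rangle_S=0$ for every $b\in B$. First I would observe that, by hypothesis, the local images $C_f(Y(k_v)')$ are unramified for all $v\notin S$, so $\res_v(\ell)$ lands in the unramified subgroup of $L_v^\times/k_v^\times L_v^{\times 2}$ for every $v\notin S$; equivalently, $\ell$ lies in the subgroup $(L^\times/k^\times L^{\times 2})_S$. Thus $\ell$ is in the source of the restriction map appearing in Theorem~\ref{thm:pairing}\eqref{pairing2}, and by that part of the theorem its image $(\ell_v)_{v\in S}$ lies in the \emph{left kernel} of the pairing $\langle\,,\,\rangle_S$. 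By definition of the left kernel this means $\langle(\ell_v)_{v\in S},m\rangle_S=0$ for \emph{every} $m\in (L^\times/L^{\times2})_{S,N=1}$, and in particular for every $b\in B$, since $B\subset (L^\times/L^{\times 2})_{S,N=1}$ by Definition~\ref{def:VBS}. Combining the two conditions shows $(\ell_v)_{v\in S}\in V_{B,S}(Y)$, which is the claim.

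The only point requiring care — and the step I would call the main (mild) obstacle — is verifying that $\ell\in(L^\times/k^\times L^{\times2})_S$, i.e.\ that membership of $\res_v(\ell)$ in $C_f(Y(k_v)')$ for $v\notin S$ genuinely forces $\res_v(\ell)$ to be unramified in the precise sense of the Definition at the start of Section~\ref{sec:etale}. This is exactly where the standing hypothesis ``the images of $C_f\colon Y(k_v)'\to L_v^\times/k_v^\times L_v^{\times2}$ are unramified at all primes not in $S$'' is used, so there is nothing further to prove; one simply notes that this hypothesis is the same one under which $\Sel^f(Y)$ was related to torsors and under which Theorem~\ref{newthm1} was proved, and that it places $\res_v(\ell)$ in the unramified subgroup at every $v\notin S$ by definition. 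Once this is recorded, the rest is a direct appeal to Theorem~\ref{thm:pairing}\eqref{pairing2} and the inclusion $B\subset(L^\times/L^{\times2})_{S,N=1}$, with no computation involved.
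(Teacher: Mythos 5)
Your proof is correct and follows essentially the same route as the paper's: show that the hypothesis about unramified local images places $\Sel^f(Y)$ inside $(L^\times/k^\times L^{\times 2})_S$, invoke Theorem~\ref{thm:pairing}\eqref{pairing2} to conclude the image is in the left kernel of $\langle\,,\,\rangle_S$ and hence orthogonal to $B$, and note separately that the local membership condition in $V_{B,S}(Y)$ is built into the definition of $\Sel^f(Y)$. The paper's proof is just terser, compressing all three of these observations into two sentences.
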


\begin{proof}
	By assumption $\Sel^f(Y)$ is contained in $(L^\times/k^\times L^{\times 2})_S$. Hence, the image of $\Sel^f(Y)$ under the restriction map is orthogonal to $B$ with respect to $\langle\,,\,\rangle_S$ by Theorem~\ref{thm:pairing}. Then the image is contained in $V_{B,S}(Y)$ as claimed.
\end{proof}

\begin{Corollary}\label{cor:Sha}
	Suppose that $\Pic^1(X_{k_v}) \ne \emptyset$ for all $v$. If $V_{B,S}(\Pic^1_X) = \emptyset$, then $\Pic^1_X$ is not divisible by $2$ in $\Sha(\Pic^0_X)$ and $\#\Sha(\Pic^0_X)[2] \ge 4$.
\end{Corollary}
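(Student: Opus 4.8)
The plan is to pass from the emptiness of $V_{B,S}(\Pic^1_X)$ to the emptiness of the fake Selmer set $\Sel^f(\Pic^1_X)$, reinterpret the latter as a statement about $2$-coverings of the torsor $\Pic^1_X$, and then use the genus-$3$ structure together with the Cassels--Tate pairing (via Poonen--Stoll) to get the quantitative bound. Throughout set $J=\Pic^0_X$ and $c=[\Pic^1_X]$.

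\textbf{Preliminaries.} Since $B,S$ satisfy the hypotheses of Theorem~\ref{newthm1}, the Proposition relating $\Sel^f$ to $V_{B,S}$ shows the image of $\Sel^f(\Pic^1_X)$ under restriction lies in $V_{B,S}(\Pic^1_X)=\emptyset$, so $\Sel^f(\Pic^1_X)=\emptyset$. The hypothesis $\Pic^1(X_{k_v})\neq\emptyset$ says $X$ has local index $1$ at every $v$; since the Brauer obstruction $\Pic_X(k_v)/\Pic(X_{k_v})\hookrightarrow\Br(k_v)$ is killed by the local index, it vanishes, so $\Pic^1_X(k_v)'=\Pic^1_X(k_v)$ for all $v$ (making the ``moreover'' of the preceding Proposition available), and by Lichtenbaum's period--index theorem (the genus is $3$, so the exceptional case does not arise) $\Pic^1_X(k_v)\neq\emptyset$, hence $c\in\Sha(J)$. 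Moreover Theorem~\ref{newthm1} gives $\Pic^1(X_k)=\Pic^1_X(k)'=\emptyset$; applying the Brauer vanishing at each $v$ and Hasse--Brauer--Noether ($\Br(k)\hookrightarrow\bigoplus_v\Br(k_v)$) over $k$ upgrades this to $\Pic^1_X(k)=\emptyset$, so $c\neq0$ and $I(X)\in\{2,4\}$.

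\textbf{Non-divisibility.} Suppose $c=2\xi$ with $\xi\in\Sha(J)$, and let $W$ be the corresponding everywhere locally soluble torsor under $J$. Multiplication by $2$ realizes $W\to\Pic^1_X$ as a $2$-covering (a torsor under $J[2]$ over $\Pic^1_X$ of the appropriate type), and pushing forward along $J[2]\hookrightarrow E^\vee$ produces a torsor $Z\to\Pic^1_X$ of type $\lambda_\Delta$ with $Z(k_v)\neq\emptyset$ for all $v$. By the preceding Proposition, using $\Pic^1_X(k_v)'=\Pic^1_X(k_v)$, its class lies in the image of $L^\times/k^\times L^{\times2}\to\HH^1(k,E^\vee)$ and hence gives an element of $\Sel^f(\Pic^1_X)$, contradicting the Preliminaries. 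Therefore $c\notin 2\Sha(J)$, which is the first assertion.

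\textbf{The bound $\#\Sha(J)\ge4$.} Since $\kappa_X$ is a $k$-rational divisor of degree $4$ we have $4c=[\Pic^4_X]=0$, so $c$ has order $2$ or $4$. If $c$ has order $4$ (equivalently $I(X)=4$, equivalently $2c=[\Pic^2_X]\neq0$), then $\langle c\rangle\cong\Z/4\Z$ lies in $\Sha(J)$ and we are done. If $c$ has order $2$ then $I(X)=2$, so $X$ has a $k$-rational divisor of degree $2$ and $[\Pic^2_X]=2c=0$ in $\HH^1(k,J)$; thus $J=\Pic^0_X$ is \emph{not} odd in the sense of \cite{PoonenStoll}, and their theorem gives that $\#\Sha(J)$ is a perfect square (and if $\Sha(J)$ is infinite the bound is trivial). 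Being nontrivial, $\#\Sha(J)\ge4$. (Alternatively, in this case one can avoid the square/twice-a-square dichotomy: if $\#\Sha(J)[2]=2$ it would be spanned by $c\notin2\Sha(J)$, so nondegeneracy of the Cassels--Tate pairing plus Poonen--Stoll's vanishing of the self-pairing $\langle c,c\rangle$ under the local index-$1$ hypothesis would force $\langle c,c\rangle=\tfrac12\neq0$, a contradiction.)

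\textbf{Main obstacle.} The routine parts are the two Brauer-group descents and the order-$4$ case. The delicate points are (i) verifying that the push-forward torsor in the non-divisibility step genuinely has type $\lambda_\Delta$ and that torsors with points everywhere locally are accounted for by $\Sel^f$ under the stated hypothesis --- this is exactly where the distinction between $Y(k_v)'$ and $Y(k_v)$ and the local index-$1$ condition are used --- and (ii) making precise the equivalence, special to genus $3$, between ``$I(X)=2$'' (i.e.\ $[\Pic^2_X]=0$) and ``$\Pic^0_X$ is not odd'' in the Poonen--Stoll sense. Once these are pinned down the argument is formal.
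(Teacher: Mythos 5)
Your proposal is correct, but it takes a different route in form from the paper: the paper's proof of Corollary~\ref{cor:Sha} is a one-line reduction to the already-established emptiness of $\Sel^f(\Pic^1_X)$ followed by a citation of \cite{CreutzGenJacs}*{Theorem 5.3 and Remark 5.4}. You instead unwind what that citation must contain. Your route is the expected one: from $V_{B,S}(\Pic^1_X)=\emptyset$ you get $\Sel^f(\Pic^1_X)=\emptyset$ (via the Proposition preceding the Corollary); the local index-$1$ hypothesis gives $\Pic^1_X(k_v)'=\Pic^1_X(k_v)$, so $\Sel^f(\Pic^1_X)=\emptyset$ rules out all type-$\lambda_\Delta$ torsors over $\Pic^1_X$ with everywhere local points; if $c=[\Pic^1_X]=2\xi$ with $\xi\in\Sha(J)$, pushing the resulting $2$-covering forward along $J[2]\hookrightarrow E^\vee$ would produce such a torsor, contradiction; and then $4c=[\Pic^4_X]=0$ together with the Poonen--Stoll square/twice-a-square dichotomy gives $\#\Sha(J)\ge4$. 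This is a genuine re-derivation of the black-boxed result rather than a new argument, and it is essentially sound.

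Two minor imprecisions worth flagging. First, the appeal to Lichtenbaum's period--index theorem in the Preliminaries is redundant: the hypothesis $\Pic^1(X_{k_v})\ne\emptyset$ already gives $\Pic^1_X(k_v)\ne\emptyset$ directly, with no need for period--index. Second, in the order-$2$ case your justification ``$I(X)=2$, so $[\Pic^2_X]=0$, thus $J$ is not odd'' conflates the global triviality of $[\Pic^2_X]$ with the Poonen--Stoll notion of ``odd,'' which is determined by the parity of the number of deficient primes. Under your standing local hypothesis (local index $1$ everywhere) there are no deficient primes, so $X$ is even for that reason, independently of the order of $c$; and in your alternative argument, the cleanest way to get $\langle c,c\rangle=0$ in the order-$2$ case is via the Poonen--Stoll formula $\langle x,x\rangle=\langle x,W\rangle$ with $W=[\Pic^2_X]=2c=0$, rather than via deficiency. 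These are phrasing issues; the conclusions are correct.
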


\begin{proof}
	The assumption that $\Pic^1(X_{k_v}) \ne \emptyset$ for all $v$ implies that the Cassels-Tate pairing on $\Sha(\Pic^0_X)$ is alternating \cite{PoonenStoll}, from which one deduces that $\Sha(\Pic^0_X)[2]/2\Sha(\Pic^0_X)[4]$ has square order. Under the further assumption that $\Sel^f(\Pic^1_X) = \emptyset$~\cite[Theorem 5.3]{CreutzGenJacs} shows that $\Pic^1_X$ represents a nontrivial in this group, hence its order is at least $4$.
\end{proof}
%
%

\begin{Remark}
	The size of $\Sel^f(\Pic_X^0)$ is often used to compute an upper bound for the rank of the Mordell-Weil group of the Jacobian $\Pic^0_X$. For large enough $S$ and $B$ we do have a surjective map $\Sel^f(\Pic_X^0)\twoheadrightarrow V_{B,S}(\Pic_X^0)$, but it seems unavoidable that determining the size of the kernel involves information on the class and unit groups of $L$. Consequently, we do not see how the computational advantages of our method can extend to the problem of bounding the rank of the Jacobian.
\end{Remark}

\subsection{Fake descent setups and torsors of type $\lambda_\Delta$}\label{sec:setups}

As described in Section~\ref{sec:General}, the data we consider for defining $V_{B,S}(Y)$ gives rise to a \emph{fake descent setup} $(n,\Delta,[\beta])$ in the sense of \cite{BPS}*{Definition~6.7}, with $n=2$. As in \cite[Section 6.4]{BPS} this gives rise to a Galois module $E = (\Z/n\Z)^\Delta_{\deg 0}$ and an exact sequence $0 \to R \to E \to J^\vee[n] \to 0$, where $J^\vee = \Pic^0_X = \Pic^0_Y$ and $R$ is the kernel. Dualizing gives an exact sequence $0 \to J[n] \to E^\vee \to R^\vee \to 0$, where $J = \Alb^0_X = \Alb^0_Y$ is the Albanese variety. Let $\lambda_\Delta\colon E \to J^\vee[n] \to \Pic(\Ybar)$ be the composition.

In the language of \cite[Chapter 2]{Skorobogatov}, $\lambda_\Delta$ is a \emph{type map} and by \cite[Corollary 2.3.9]{Skorobogatov} determines a (unique up to isomorphism) torsor $\overline{Z} \to \Ybar$ under $E^\vee$. A torsor $Z \to Y$ which becomes isomorphic to this after base change to $\kbar$ is called a \emph{torsor of type $\lambda_\Delta$}.

From the exact sequence above, the $E^\vee$-torsor $\overline{Z} \to \Ybar$ factors as $\overline{Z} \to \overline{W} \to \Ybar$, where $\overline{W} \to \Ybar$ is an $E^\vee/J[n] = R^\vee$-torsor whose \emph{type} is the restriction of $\lambda_\Delta$ to $R$. Since this restriction is the $0$ map, $\overline{W} = \Ybar \times R^\vee(\kbar)$ is the trivial $R^\vee$-torsor. The connected components of $\overline{Z}$ are the $J[n]$-torsors $\overline{Z}_r \to \Ybar \times \{r\}$, $r \in R^\vee(\kbar)$, all of which have the type map $J^\vee[n] \to \Pic(\Ybar)$. Such torsors are obtained by pulling back multiplication by $n$ on $\Alb^0_{\Ybar}$ and are often called \emph{$n$-coverings}. A torsor $Z \to Y$ of type $\lambda_\Delta$ is therefore a family of $n$-coverings of $Y$, parametrized by an $R^\vee$-torsor over $\Spec(k)$.

The torsors of type $\lambda_\Delta$ cut out a subset of the adelic points, $Y(\A_k)^{\lambda_\Delta} = \bigcup \pi(Z(\A_k))$, the union ranging over all torsors $\pi\colon Z \to Y$ of type $\lambda_\Delta$. Every rational point on $Y$ lifts to some torsor of type $\lambda_\Delta$, and so $Y(k) \subset Y(\A_k)^{\lambda_\Delta}$. It follows that if $Y(\A_k)^{\lambda_\Delta} = \emptyset$, then $Y(k) = \emptyset$. In this case, one says there is an obstruction to the existence of rational points on $Y$ coming from torsors of type $\lambda_\Delta$. This is a particular case of the \emph{finite abelian descent obstruction}. See~\cite[5.3]{Skorobogatov} for more details.

\begin{Remark}
If there is a $Y$-torsor of type $\lambda_\Delta$, then their isomorphism classes are parameterized by $\HH^1(k,E^\vee)$. From the Galois cohomology of $1\to\mu_n\to\mu_n^\Delta \to E^\vee\to 1$ and Hilbert's Theorem 90 we get an injective map $L^\times/k^\times L^{\times n} \to \HH^1(k,E^\vee)$. It can be shown that if $Y(k_v)' = Y(k_v)$ for all primes $v$, then all torsors with points everywhere locally lie in the image and, moreover, that $\Sel^f(X) \subset L^{\times}/k^\times L^{\times 2}$ parameterizes the set of torsors $\pi\colon Z\to X$ of type $\lambda_\Delta$ such that $Z(k_v)\neq \emptyset$ for all $v$. 
\end{Remark}

\subsection{Brauer sets}

The \'etale cohomology group $\Br(Y) := \HH^2(Y,\G_m)$ is called the \emph{Brauer group} of $Y$. By the purity theorem $\Br(Y)$ can be identified with the unramified subgroup of the Brauer group of the function field of $Y$. A class in $\Br(Y)$ can be evaluated at a point $x \in Y(K) = \Hom(\Spec(K),Y)$ defined over some extension $K/k$ by pulling back to yield an element of $\Br(K)$. For a local point $x_v \in Y(k_v)$ this can be composed with the invariant map $\inv_v \colon \Br(k_v) \to \Q/\Z$. For given $\alpha \in \Br(Y)$, the resulting maps are nonzero for only finitely many primes. In this way, each $\alpha \in \Br(Y)$ determines a map
\[
Y(\A_k) = \prod_{v}Y(k_v) \ni (x_v) \mapsto \sum_v \inv_v(\alpha(x_v)) \in \Q/\Z\,.
\]
For a subset $B' \subset \Br(Y)$, let $Y(\A_k)^{B'}$ denote the subset of adelic points which map to $0 \in \Q/\Z$ under all of the maps induced by the $b \in B'$. By global class field theory one has $Y(k) \subset Y(\A_k)^{B'}$.

The type map $\lambda_\Delta$ gives rise to a subgroup of the Brauer group of $Y$. From the Hochschild-Serre spectral sequence, one has a morphism $r\colon \Br_1(Y) \to \Br_1(Y)/\Br_0(Y) \simeq \HH^1(k,\Pic(\Ybar))$. One defines $\Br_{\lambda_\Delta}(Y) = r^{-1}({\lambda_\Delta}_*(\HH^1(k,E)))$. The descent theory of abelian torsors of finite type \cite[Theorem 6.1.2]{Skorobogatov} gives an equality of sets $Y(\A_k)^{\lambda_\Delta} = Y(\A_k)^{\Br_{\lambda_\Delta}(Y)}$ and hence equivalence of the obstructions.

The definition of $E$ gives an exact diagram of Galois modules (the bottom row follows from the snake lemma applied to the top two rows and defines $Q$):

\[\begin{tikzcd}
	& 0 & R & E & {J^\vee[n]} & 0 \\
	& 0 & {(\Z/n\Z)^\Delta} & {(\Z/n\Z)^\Delta} & 0 \\
	0 & {J^\vee[n]} & Q & {\Z/n\Z} & 0
	\arrow[from=1-2, to=1-3]
	\arrow[from=1-3, to=1-4]
	\arrow[hook, from=1-3, to=2-3]
	\arrow[from=1-4, to=1-5]
	\arrow[hook, from=1-4, to=2-4]
	\arrow[from=1-5, to=1-6]
	\arrow[from=2-2, to=2-3]
	\arrow[equal, from=2-3, to=2-4]
	\arrow[two heads, from=2-3, to=3-3]
	\arrow[from=2-4, to=2-5]
	\arrow[two heads, from=2-4, to=3-4]
	\arrow[from=3-1, to=3-2]
	\arrow[from=3-2, to=3-3]
	\arrow[from=3-3, to=3-4]
	\arrow[from=3-4, to=3-5]
\end{tikzcd}\]
Taking Galois cohomology we obtain an exact diagram
\begin{equation}\label{diagramQ}
\begin{tikzcd}
	\HH^0(k,Q) \arrow[r]\arrow[d]& \Z/n\Z \arrow[d]\arrow[r]& 0 \arrow[d] \\
	\HH^1(k,R) \arrow[r] & \HH^1(k,E) \arrow[r] \arrow[d,two heads]& \HH^1(k,J^\vee[n])\\
	&P
\end{tikzcd}
\end{equation}
where $P = \ker\left(\HH^1(k,\Z/n\Z^\Delta) \to \HH^1(k,\Z/n\Z)\right)$. The image of $\Br_{\lambda_\Delta}(Y)$ in $\Br(Y)/\Br_0(Y)$ is the image of the map $\HH^1(k,E) \to \HH^1(k,J^\vee[n]) \to \HH^1(k,\Pic_Y) \simeq \Br_1(Y)/\Br_0(Y)$. By exactness, this map $\HH^1(k,E) \to \Br(Y)/\Br_0(Y)$ factors through $P$, so there is a surjective map $\Gamma\colon P \to \Br_{\lambda_\Delta}(Y)/\Br_0(Y)$.

In the case $n = 2$, Hilbert's Theorem 90 yields $P \simeq (L^\times/L^{\times 2})_{N=1}$. So there is a surjective homomorphism $(L^\times/L^{\times 2})_{N=1} \to \Br_{\lambda_\Delta}(Y)/\Br_0(Y)$. The following proposition gives an explicit description of this map in the case $Y = X$, relating it to the pairing~\eqref{Hilb2} used to define $V_{B,S}(X)$.

\begin{Proposition}\label{prop:Br}
	There is a homomorphism 
	\[
	\gamma \colon \left(L^\times/L^{\times 2}\right)_{N=1} \to \Br_{\lambda_\Delta}(X)\,,
	\]
	whose image surjects onto $\Br_{\lambda_\Delta}(X)/\Br_0(X)$, defined by $$\gamma(\ell) = \Cor_{L/k}\left(\calA(f,\ell)\right)\,,$$ where $\Cor_{L/k}\colon \Br(k(X)\otimes L) \to \Br(k(X))$ is the corestriction map and $\calA(f,\ell)$ is the quaternion algebra over $k(X)\otimes L$ defined by $f,\ell \in (k(X)\otimes L)^\times$. If $S$ is a finite set of primes of $k$ such that
	\begin{enumerate}
		\item for all $v \not\in S$, the local images $C_f(X(k_v)) \subset L_v^\times/k_v^{\times}L_v^{\times 2}$ are unramified, and
		\item $\ell$ represents a class in $(L^\times/L^{\times 2})_{S,N=1}$,
	\end{enumerate}
	then for any adelic point $(x_v) \in X(\A_k) = \prod X(k_v)$ we have
	\[
	\sum_{v \in \Omega_k} \inv_v(\gamma(\ell)(x_v)) = \langle (C_f(x_v))_{v \in S}, \ell \rangle_S \in \Q/\Z\,.
	\]
\end{Proposition}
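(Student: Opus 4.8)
The plan is to prove the proposition in two stages: first construct $\gamma$ and check that it lands in $\Br_{\lambda_\Delta}(X)$, then establish the reciprocity-type formula by a local computation at each place combined with the definition of $\langle\,,\,\rangle_S$ from Theorem~\ref{thm:pairing}. For the first stage, I would define $\gamma(\ell)=\Cor_{L/k}(\calA(f,\ell))$ as a class in the Brauer group of the function field $k(X)$; since $\ell \in L^\times$ is a constant (in $L$) and $\divv(f)=2\beta-\kappa_X\times\Delta$ is $2$-divisible away from $\beta$, the local residues of $\calA(f,\ell)$ along divisors of $X_L$ all vanish (the tame symbol of $(f,\ell)$ is trivial because $\ell$ is a square in the relevant residue fields up to the unit $\ell$ itself, and one checks the residue along $\beta$ using that $\ell$ has square norm)—hence $\calA(f,\ell)$ is unramified on $X_L$, i.e.\ lies in $\Br(X_L)$, and its corestriction lies in $\Br(X)$. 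That it lies in the subgroup $\Br_{\lambda_\Delta}(X)$ is where I would invoke the diagram~\eqref{diagramQ} and the surjection $\Gamma\colon P\to \Br_{\lambda_\Delta}(X)/\Br_0(X)$: under the identification $P\simeq (L^\times/L^{\times 2})_{N=1}$ coming from Hilbert~90, the map $\Gamma$ is computed by exactly this corestriction-of-quaternion-algebra recipe, so $\gamma$ is (a lift of) $\Gamma$. This is essentially a bookkeeping check tracing the maps $\HH^1(k,E)\to\HH^1(k,J^\vee[n])\to\HH^1(k,\Pic_X)\simeq\Br_1(X)/\Br_0(X)$ through the Kummer sequences.

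For the second stage, fix an adelic point $(x_v)$ and compute $\inv_v(\gamma(\ell)(x_v))$ place by place. Evaluating $\gamma(\ell)=\Cor_{L/k}(\calA(f,\ell))$ at $x_v\in X(k_v)$ and then applying $\inv_v$ gives, by compatibility of corestriction with the invariant map, $\inv_v\Cor_{L_v/k_v}(\calA(f(x_v),\res_v(\ell)))=(f(x_v),\res_v(\ell))_v$ in the notation of~\eqref{eq:Hilb}—here $f(x_v)=C_f(x_v)$ up to squares and up to $k_v^\times$ (the dependence on the auxiliary linear form $h$ washes out because $\ell$ has square norm, so $(h,\res_v\ell)$ corestricts to a quaternion algebra $\calA(h,N_{L/k}\ell)$ with trivial invariant, exactly as in the proof of Theorem~\ref{thm:pairing}\eqref{pairing1}). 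So the global sum becomes $\sum_{v\in\Omega_k}(C_f(x_v),\res_v(\ell))_v$. For $v\notin S$, assumption (1) says $C_f(x_v)$ is unramified and assumption (2) says $\res_v(\ell)$ is unramified (being in $(L^\times/L^{\times 2})_S$, hence unramified outside $S$); the Hilbert symbol of two unramified classes at a finite place is $0$, and for archimedean $v\notin S$ there is nothing (or we absorb such $v$ into $S$ from the start, as is standard). Therefore the sum collapses to $\sum_{v\in S}(C_f(x_v),\res_v(\ell))_v=\langle(C_f(x_v))_{v\in S},\ell\rangle_S$, which is the claimed identity.

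The main obstacle I anticipate is the careful verification that $\calA(f,\ell)$ is actually unramified on $X$ (equivalently on $X_L$), and that its residue along the contact divisor $\beta$ vanishes: this is where the hypothesis $N_{L/k}(\ell)\in k^{\times 2}$, or rather the structure of $\ell$ as a class in $(L^\times/L^{\times 2})_{N=1}$, genuinely enters, and it is the one step that is not pure formalism. A clean way to handle it is to note that by Lemma~\ref{lem:descentmap} the function $f$ on $X\times\Delta$ has divisor $2\beta-\kappa_X\times\Delta$, so on any affine open avoiding $\beta$ the class of $f$ modulo squares is locally constant, making $\calA(f,\ell)$ manifestly unramified there; near $\beta$ one uses that $\beta$ is a divisor of degree $2$ defined over $L$ together with the fact that corestriction from $L$ to $k$ of the residue, which is governed by $N_{L/k}(\ell)$, is trivial. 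The remaining identifications—compatibility of $\Cor$ with evaluation at $x_v$ and with $\inv_v$, and the identification of $\Gamma$ with the corestriction recipe—are standard functorialities of Galois cohomology and Brauer groups, so I would state them and cite, e.g., the relevant parts of \cite{BPS} and \cite{Skorobogatov} rather than reprove them.
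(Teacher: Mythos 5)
The paper's own proof of this proposition is a one-line citation to the proof of \cite[Theorem 1.1]{CV}, so your proposal is in effect reconstructing the argument that is being imported. The overall architecture you propose is right: show $\Cor_{L/k}\calA(f,\ell)$ is unramified and lands in $\Br_{\lambda_\Delta}(X)$ by tracing the Kummer and Hochschild--Serre maps, then compute the Brauer--Manin pairing place by place using compatibility of corestriction with evaluation and the invariant map, and kill the terms outside $S$ by unramifiedness. The second stage of your argument, including the observation that the choice of $h$ (i.e.\ the $k^\times$-ambiguity in $C_f$) washes out because $N_{L/k}(\ell)$ is a square, matches the intended proof.

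However, your unramifiedness check in the first stage contains a concrete error: you have swapped the roles of the two components of $\divv(f)$. You write that $\divv(f)=2\beta-\kappa_X\times\Delta$ is ``$2$-divisible away from $\beta$'' and that one ``checks the residue along $\beta$ using that $\ell$ has square norm,'' and later that $\calA(f,\ell)$ is ``manifestly unramified'' on opens avoiding $\beta$. This is backwards. Since $\ell$ is a constant unit, the tame-symbol residue of $\calA(f,\ell)$ at a codimension-one point $P$ is the class of $\overline{\ell}^{\,\ord_P(f)}$. Along $\beta$ the order of $f$ is $2$, so the residue is $\overline{\ell}^{\,2}=1$ and nothing about the norm of $\ell$ is needed. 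The genuine obstruction to unramifiedness sits along $\kappa_X\times\Delta$ (the zero locus of $h$), where $\ord(f)=-1$: there the residue on $X_L$ is $\overline{\ell}^{-1}$, which is generally nontrivial, so $\calA(f,\ell)$ is \emph{not} unramified on $X_L$, contrary to your claim. What is true is that after applying $\Cor_{L/k}$ and using compatibility of corestriction with residues, the residue of $\Cor_{L/k}\calA(f,\ell)$ along $\divv(h)\subset X$ is governed by $N_{L/k}(\ell)$, and this is what the hypothesis $N_{L/k}(\ell)\in k^{\times 2}$ kills. So the square-norm condition is used at $\kappa_X$, not at $\beta$, and only the corestriction is unramified, not $\calA(f,\ell)$ itself. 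One smaller point: the vanishing of the local Hilbert symbol for unramified pairs at finite $v\notin S$ requires $v$ of odd residue characteristic, so you should note (as the paper does in Section~2.1) that $S$ is taken to contain the archimedean and dyadic places.
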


\begin{proof}
	This follows similarly to the proof of \cite[Theorem 1.1]{CV}.
\end{proof}

\begin{Corollary}\label{Cor:Br}
	Let $S$ be a finite set of primes, let $B \subset (L^\times/L^{\times 2})_{S,N=1}$ and consider $\gamma(B) \subset \Br_{\lambda_\Delta}(X)$. If $X$ and $S$ satisfy the conditions of Theorem~\ref{newthm1}, then $V_{B,S}(X)$ is the image of $X(\A_k)^{\gamma(B)}$ under the map $X(\A_k) \to \prod_{v\in S}C_f(X(k_v))$.
\end{Corollary}

\begin{Corollary}\label{Cor:Br2}
	There is an explicitly computable finite set of primes $S$ such that with $B = (L^\times/L^{\times 2})_{S,N=1}$, the following are equivalent
	\begin{enumerate}
		\item\label{a1} $V_{B,S}(X) = \emptyset$,
		\item\label{a3} $X(\A_k)^{\Br_{\lambda_\Delta}(X)} = \emptyset$,
		\item\label{a4} $X(\A_k)^{\lambda_\Delta} = \emptyset$,
		\item\label{a2} $\Sel^f(X) = \emptyset$.
	\end{enumerate}
\end{Corollary}

\begin{proof}
	 
	The equivalence of the obstructions given by $\gamma(B) = \Br_{\lambda_\Delta,S}(X)$ and $\Br_{\lambda_\Delta}(X)$ for sufficiently large $S$ is a special case of \cite[Theorem 3.1]{CreutzSrivastava}. This gives the equivalence of~\eqref{a1} and~\eqref{a3}. The equivalence of~\eqref{a3} and~\eqref{a4} follows from \cite[Theorem 6.1.2]{Skorobogatov}. The implication~\eqref{a1} $\Rightarrow$~\eqref{a2} follows from Proposition~\ref{prop:Sel}. It only remains to show that~\eqref{a2} implies one of the other statements.
	
	We can assume the local images are unramified outside $S$, so that $\Sel^f(X)$ is contained in the finite set $(L^\times/k^\times L^{\times_2})_{S}$. So, if $\Sel^f(X) = \emptyset$, then there must be a finite set of primes $T$ containing $S$ such that the set
	\[
		A = \{ \ell \in (L^\times/k^\times L^{\times 2})_{S} \::\; \res_v(\ell) \in C_f(X(k_v)) \text{ for all $v \in T$} \}
	\]
	is empty. Since the local images are unramified at $T \setminus S$, $A$ is equal to the set 
	\[
		A' = \{ \ell \in (L^\times/k^\times L^{\times 2})_{T} \::\; \res_v(\ell) \in C_f(X(k_v)) \text{ for all $v \in T$} \}\,.
	\]
	With $B' = (L^\times/L^{\times 2})_{T,N=1}$ it follows from Theorem~\ref{thm:pairing} that $V_{B',T}(X) = A' = \emptyset$. By Proposition~\ref{prop:Br} this implies that $X(\A_k)^{\Br_{\lambda_\Delta}} = \emptyset$, whence~\eqref{a2} $\Rightarrow$~\eqref{a3}.
\end{proof}
%
%

\subsection{Brauer classes from $\HH^1(k,R)$}

By exactness of \eqref{diagramQ}, elements in the image of the map $\HH^1(k,R) \to \HH^1(k,E)$ map to $0$ in $\Br_1(X)/\Br_0(X)$. This observation provides some insight into what kind of elements from $(L^\times/L^{\times 2})_{N=1}$ can provide non-trivial information on rational points.

We give an explicit example for a plane quartic $X$ with an \emph{even theta characteristic}. We first review some of the classical theory of theta characteristics on plane quartics (see \cite{Dolgachev:CAG}*{Chapter~6}). Let us write $\kappa_X\in\Pic(X)$ for the canonical class on $X$. We say $\theta\in\Pic(X)$ is a \emph{theta characteristic} if $2\theta=\kappa_X$.
The contact points of a bitangent to $X$ sum to a theta characteristic. These are called \emph{odd} theta characteristics. There are $28$ of those and the algebra $L$ is the corresponding affine coordinate ring for them.

The remaining $36$ theta characteristics of $X$ are \emph{even} theta characteristics. The even and odd theta characteristics are related via \emph{Aronhold sets}. An Aronhold set is a collection of $7$ odd theta characteristics $\{\theta_1,\ldots,\theta_7\}$ such that no triple $\{\theta_i,\theta_j,\theta_k\}$ of distinct members has $\theta_i-\theta_j+\theta_k$ equal to an odd theta characteristic.

We have that $\theta_1+\cdots+\theta_7=7\theta_0$ for some even theta characteristic $\theta_0$. Indeed the $288$ Aronhold sets are partitioned into $36$ sets of $8$, where each set $A_{\theta_0}$ consist of those that sum to $7\theta_0$. Any two Aronhold sets from $A_{\theta_0}$ have a unique odd theta characteristic in their intersection, reflecting that $\binom{8}{2}=28$.

If we have an even theta characteristic $\theta_0$ on $X$ and write $A=A_{\theta_0}$ for the corresponding octet, then we obtain a homomorphism
$(\Z/2\Z)^A \to (\Z/2\Z)^\Delta$, sending each Aronhold set to its sum. Since these sums all have the same image in $\Pic(X)$, we see that this map restricts to a homomorphism $(\Z/2\Z)^A_{\deg 0} \to R.$

Let $L_A$ be the octic \'etale algebra that is the affine coordinate ring of $A$. We have a morphism $L_A^\times \to \Sym^2(L_A)^\times$ by taking the product of evaluation at the pair. With the observation about pairwise intersections made above, we see that $\Sym^2(L_A)=L_A\oplus L,$
and hence we obtain a map
$L_A^\times \to L^\times.$
However, on the level of cohomology, this represents the derived morphism $\HH^1(k,(\Z/2\Z)^A_{\deg 0}) \to \HH^1(k,R).$
As noted above, elements in the image of $\HH^1(k,R) \to \HH^1(k,E)$ map to $0$ in $\Br_1(X)/\Br_0(X)$, so for the purpose of finding elements that are non-trivial with respect to the $\langle\,,\,\rangle_S$-pairing, any $S$-unit from $L_A$ is not useful.


	\begin{bibdiv}
		\begin{biblist}
		
\bib{magma}{article}{
   author={Bosma, Wieb},
   author={Cannon, John},
   author={Playoust, Catherine},
   title={The Magma algebra system. I. The user language},
   note={Computational algebra and number theory (London, 1993)},
   journal={J. Symbolic Comput.},
   volume={24},
   date={1997},
   number={3-4},
   pages={235--265},
   issn={0747-7171},
   doi={10.1006/jsco.1996.0125},
}

\bib{Github}{misc}{
   author={Bruin, Nils},
   author={Creutz, Brendan},
   title={Electronic Resources},
   note={Magma code available at: \url{https://arxiv.org/src/2601.16975/anc}},
   year={2026},
}  

\bib{BPS}{article}{
   author={Bruin, Nils},
   author={Poonen, Bjorn},
   author={Stoll, Michael},
   title={Generalized explicit descent and its application to curves of
   genus 3},
   journal={Forum Math. Sigma},
   volume={4},
   date={2016},
   pages={Paper No. e6, 80},
   doi={10.1017/fms.2016.1},
}

\bib{BruinStoll}{article}{
   author={Bruin, Nils},
   author={Stoll, Michael},
   title={Two-cover descent on hyperelliptic curves},
   journal={Math. Comp.},
   volume={78},
   date={2009},
   number={268},
   pages={2347--2370},
   issn={0025-5718},
   doi={10.1090/S0025-5718-09-02255-8},
}

\bib{Buchmann1990}{article}{
	author={Buchmann, Johannes},
	title={A subexponential algorithm for the determination of class groups
		and regulators of algebraic number fields},
	conference={
		title={S\'eminaire de Th\'eorie des Nombres, Paris 1988--1989},
	},
	book={
		series={Progr. Math.},
		volume={91},
		publisher={Birkh\"auser Boston, Boston, MA},
	},
	isbn={0-8176-3493-2},
	date={1990},
	pages={27--41},
}

\bib{CreutzANTS}{article}{
   author={Creutz, Brendan},
   title={Explicit descent in the Picard group of a cyclic cover of the
   projective line},
   conference={
      title={ANTS X---Proceedings of the Tenth Algorithmic Number Theory
      Symposium},
   },
   book={
      series={Open Book Ser.},
      volume={1},
      publisher={Math. Sci. Publ., Berkeley, CA},
   },
   isbn={978-1-935107-01-9},
   isbn={978-1-935107-00-2},
   date={2013},
   pages={295--315},
   doi={10.2140/obs.2013.1.295},
}

\bib{CreutzGenJacs}{article}{
   author={Creutz, Brendan},
   title={Generalized Jacobians and explicit descents},
   journal={Math. Comp.},
   volume={89},
   date={2020},
   number={323},
   pages={1365--1394},
   issn={0025-5718},
   doi={10.1090/mcom/3491},
}

\bib{CreutzSrivastava}{article}{
   author={Creutz, Brendan},
   author={Srivastava, Duttatrey Nath},
   title={Brauer-Manin obstructions on hyperelliptic curves},
   journal={Adv. Math.},
   volume={431},
   date={2023},
   pages={Paper No. 109238, 28},
   issn={0001-8708},
   doi={10.1016/j.aim.2023.109238},
}

\bib{CV}{article}{
   author={Creutz, Brendan},
   author={Viray, Bianca},
   title={Two torsion in the Brauer group of a hyperelliptic curve},
   journal={Manuscripta Math.},
   volume={147},
   date={2015},
   number={1-2},
   pages={139--167},
   issn={0025-2611},
   doi={10.1007/s00229-014-0721-7},
}	

\bib{Dolgachev:CAG}{book}{
	author={Dolgachev, Igor V.},
	title={Classical algebraic geometry},
	note={A modern view},
	publisher={Cambridge University Press, Cambridge},
	date={2012},
	pages={xii+639},
	isbn={978-1-107-01765-8},
	doi={10.1017/CBO9781139084437},
}

\bib{Lichtenbaum}{article}{
   author={Lichtenbaum, Stephen},
   title={Duality theorems for curves over $p$-adic fields},
   journal={Invent. Math.},
   volume={7},
   date={1969},
   pages={120--136},
   issn={0020-9910},
}
  
 \bib{Milne}{book}{
   author={Milne, J. S.},
   title={Arithmetic duality theorems},
   series={Contemporary Mathematics},
   volume={103},
   note={Second edition (free version), corrections added},
   publisher={BookSurge, LLC, Charleston, SC},
   date={2006},
   pages={viii+347},
   isbn={978-1-4196-4274-6},
} 

\bib{PoonenSchaefer}{article}{
 author={Poonen, Bjorn},
author={Schaefer, Edward F.},
title={Explicit descent for Jacobians of cyclic covers of the projective line},
journal={J. Reine Angew. Math.},
volume={488},
date={1997},
pages={141--188},
issn={0075-4102},
}

\bib{PoonenStoll}{article}{
	author={Poonen, Bjorn},
	author={Stoll, Michael},
	title={The Cassels-Tate pairing on polarized abelian varieties},
	journal={Ann. of Math. (2)},
	volume={150},
	date={1999},
	number={3},
	pages={1109--1149},
	issn={0003-486X},
	doi={10.2307/121064},
}

\bib{Serre}{book}{
   author={Serre, Jean-Pierre},
   title={Local fields},
   series={Graduate Texts in Mathematics},
   volume={67},
   note={Translated from the French by Marvin Jay Greenberg},
   publisher={Springer-Verlag, New York-Berlin},
   date={1979},
   pages={viii+241},
   isbn={0-387-90424-7},
}

\bib{Skorobogatov}{book}{
   author={Skorobogatov, Alexei},
   title={Torsors and rational points},
   series={Cambridge Tracts in Mathematics},
   volume={144},
   publisher={Cambridge University Press, Cambridge},
   date={2001},
   pages={viii+187},
   isbn={0-521-80237-7},
  doi={10.1017/CBO9780511549588},
}

\bib{QL}{article}{
   author={Sutherland, Andrew V.},
   title={A database of nonhyperelliptic genus-3 curves over $\Bbb Q$},
   conference={
      title={Proceedings of the Thirteenth Algorithmic Number Theory
      Symposium},
   },
   book={
      series={Open Book Ser.},
      volume={2},
      publisher={Math. Sci. Publ., Berkeley, CA},
   },
   isbn={978-1-935107-03-3},
   isbn={978-1-935107-02-6},
   date={2019},
   pages={443--459},
}
	
		\end{biblist}
	\end{bibdiv}

\end{document}